\chardef\forshowkeys=0
\chardef\refcheck=0
\chardef\showllabel=0
\chardef\sketches=0
\definecolor{mygray}{rgb}{.6, .6, .6}
\chardef\coloryes=0 
\chardef\isitdraft=0 
\def\eqref#1{({\ref{#1}})}                
\definecolor{refkey}{rgb}{.3,0.3,0.3}
\def\nnewpage{} 
\def\startnewsection#1#2{\section{#1}\label{#2}\setcounter{equation}{0}}   
\def\nnewpage{} 
\begin{document}
	
	\def\ques{{\colr \underline{??????}\colb}}
	\def\nto#1{{\colC \footnote{\em \colC #1}}}
	\def\fractext#1#2{{#1}/{#2}}
	\def\fracsm#1#2{{\textstyle{\frac{#1}{#2}}}}   
	\def\nnonumber{}
	\def\les{\lesssim}
	
	\def\colr{{}}
	\def\colg{{}}
	\def\colb{{}}
	\def\colu{{}}
	\def\cole{{}}
	\def\colA{{}}
	\def\colB{{}}
	\def\colC{{}}
	\def\colD{{}}
	\def\colE{{}}
	\def\colF{{}}

	\ifnum\coloryes=1
	
	\definecolor{coloraaaa}{rgb}{0.1,0.2,0.8}
	\definecolor{colorbbbb}{rgb}{0.1,0.7,0.1}
	\definecolor{colorcccc}{rgb}{0.8,0.3,0.9}
	\definecolor{colordddd}{rgb}{0.0,.5,0.0}
	\definecolor{coloreeee}{rgb}{0.8,0.3,0.9}
	\definecolor{colorffff}{rgb}{0.8,0.9,0.9}
	\definecolor{colorgggg}{rgb}{0.5,0.0,0.4}
	\definecolor{coloroooo}{rgb}{0.45,0.0,0}
	
	\def\colb{\color{black}}
	
	\def\colr{\color{red}}
	\def\cole{\color{coloroooo}}
	
	\def\colu{\color{blue}}
	\def\colg{\color{colordddd}}
	\def\colgray{\color{colorffff}}
	
	\def\colA{\color{coloraaaa}}
	\def\colB{\color{colorbbbb}}
	\def\colC{\color{colorcccc}}
	\def\colD{\color{colordddd}}
	\def\colE{\color{coloreeee}}
	\def\colF{\color{colorffff}}
	\def\colG{\color{colorgggg}}
	
	\def\cole{}
	
	\fi
	\ifnum\isitdraft=1
	\chardef\coloryes=1 
	\baselineskip=17.6pt
	\pagestyle{myheadings}
	\def\const{\mathop{\rm const}\nolimits}  
	\def\diam{\mathop{\rm diam}\nolimits}    
	\def\dist{\mathop{\rm dist}\nolimits}    
	\def\rref#1{{\ref{#1}{\rm \tiny \fbox{\tiny #1}}}}
	\def\theequation{\fbox{\bf \thesection.\arabic{equation}}}
	\def\startnewsection#1#2{\colg \section{#1}\colb\label{#2}
		\setcounter{equation}{0}
		\pagestyle{fancy}
		\lhead{\colb Section~\ref{#2}, #1 }
		\cfoot{}
		\rfoot{\thepage\ of \pageref{LastPage}}
		
		\chead{}
		\rhead{\thepage}
		\def\nnewpage{\newpage}
		\newcounter{startcurrpage}
		\newcounter{currpage}
		\def\llll#1{{\rm\tiny\fbox{#1}}}
		\def\blackdot{{\color{red}{\hskip-.0truecm\rule[-1mm]{4mm}{4mm}\hskip.2truecm}}\hskip-.3truecm}
		\def\bluedot{{\colC {\hskip-.0truecm\rule[-1mm]{4mm}{4mm}\hskip.2truecm}}\hskip-.3truecm}
		\def\purpledot{{\colA{\rule[0mm]{4mm}{4mm}}\colb}}
		\def\pdot{\purpledot}
		\else
		\baselineskip=12.8pt
		\def\blackdot{{\color{red}{\hskip-.0truecm\rule[-1mm]{4mm}{4mm}\hskip.2truecm}}\hskip-.3truecm}
		\def\purpledot{{\rule[-3mm]{8mm}{8mm}}}
		\def\pdot{}
		\fi

		\def\textand{\qquad \text{and}\qquad}
		\def\pp{p}
		\def\qq{{\tilde p}}
		\def\KK{K}
		\def\MM{M}
		\def\ema#1{{#1}}
		\def\emb#1{#1}
		
		\ifnum\isitdraft=1
		\def\llabel#1{\nonumber}
		\else
		\def\llabel#1{\nonumber}
		\def\llabel#1{\label{#1}}
		\fi
		
		\def\tepsilon{\tilde\epsilon}
		\def\epsilonz{\epsilon_0}
		\def\restr{\bigm|}
		\def\into{\int_{\Omega}}
		\def\intu{\int_{\Gamma_1}}
		\def\intl{\int_{\Gamma_0}}
		\def\tpar{\tilde\partial}
		\def\bpar{\,|\nabla_2|}
		\def\barpar{\bar\partial}
		\def\FF{F}
		\def\gdot{{\color{green}{\hskip-.0truecm\rule[-1mm]{4mm}{4mm}\hskip.2truecm}}\hskip-.3truecm}
		\def\bdot{{\color{blue}{\hskip-.0truecm\rule[-1mm]{4mm}{4mm}\hskip.2truecm}}\hskip-.3truecm}
		\def\cydot{{\color{cyan} {\hskip-.0truecm\rule[-1mm]{4mm}{4mm}\hskip.2truecm}}\hskip-.3truecm}
		\def\rdot{{\color{red} {\hskip-.0truecm\rule[-1mm]{4mm}{4mm}\hskip.2truecm}}\hskip-.3truecm}
		
		\def\tdot{\fbox{\fbox{\bf\color{blue}\tiny I'm here; \today \ \currenttime}}}
		\def\nts#1{{\color{red}\hbox{\bf ~#1~}}} 
		
		\def\ntsr#1{\vskip.0truecm{\color{red}\hbox{\bf ~#1~}}\vskip0truecm} 
		
		\def\ntsf#1{\footnote{\hbox{\bf ~#1~}}} 
		\def\ntsf#1{\footnote{\color{red}\hbox{\bf ~#1~}}} 
		\def\bigline#1{~\\\hskip2truecm~~~~{#1}{#1}{#1}{#1}{#1}{#1}{#1}{#1}{#1}{#1}{#1}{#1}{#1}{#1}{#1}{#1}{#1}{#1}{#1}{#1}{#1}\\}
		\def\biglineb{\bigline{$\downarrow\,$ $\downarrow\,$}}
		\def\biglinem{\bigline{---}}
		\def\biglinee{\bigline{$\uparrow\,$ $\uparrow\,$}}
		\def\ceil#1{\lceil #1 \rceil}
		\def\gdot{{\color{green}{\hskip-.0truecm\rule[-1mm]{4mm}{4mm}\hskip.2truecm}}\hskip-.3truecm}
		\def\bluedot{{\color{blue} {\hskip-.0truecm\rule[-1mm]{4mm}{4mm}\hskip.2truecm}}\hskip-.3truecm}
		\def\rdot{{\color{red} {\hskip-.0truecm\rule[-1mm]{4mm}{4mm}\hskip.2truecm}}\hskip-.3truecm}
		\def\dbar{\bar{\partial}}
		\newtheorem{Theorem}{Theorem}[section]
		\newtheorem{Corollary}[Theorem]{Corollary}
		\newtheorem{Proposition}[Theorem]{Proposition}
		\newtheorem{Lemma}[Theorem]{Lemma}
		\newtheorem{Remark}[Theorem]{Remark}
		\newtheorem{definition}{Definition}[section]
		\def\theequation{\thesection.\arabic{equation}}
		\def\cmi#1{{\color{red}IK: #1}}
		\def\cmj#1{{\color{red}IK: #1}}
		\def\cml{\rm \colr Linfeng:~} 
		\def\TT{\mathbf{T}}
		\def\XX{\mathbf{X}}

		\def\sqrtg{\sqrt{g}}
		\def\DD{{\mathcal D}}
		\def\OO{\tilde\Omega}
		\def\EE{{\mathcal E}}
		\def\lot{{\rm l.o.t.}}                       
		\def\endproof{\hfill$\Box$\\}
		\def\square{\hfill$\Box$\\}
		\def\inon#1{\ \ \ \ \text{~~~~~~#1}}                
		\def\comma{ {\rm ,\qquad{}} }            
		\def\commaone{ {\rm ,\qquad{}} }         
			\def\bfx{\mathbf{x}}
		\def\dist{\mathop{\rm dist}\nolimits}    
		\def\ad{\mathop{\rm ad}\nolimits}    
		\def\sgn{\mathop{\rm sgn\,}\nolimits}   
		\def\Tr{\mathop{\rm Tr}\nolimits}    
		\def\dive{\mathop{\rm div}\nolimits}    
		\def\grad{\mathop{\rm grad}\nolimits}    
		\def\curl{\mathop{\rm curl}\nolimits}    
		\def\det{\mathop{\rm det}\nolimits}    
		\def\supp{\mathop{\rm supp}\nolimits}    
		\def\re{\mathop{\rm {\mathbb R}e}\nolimits}    
		\def\wb{\bar{\omega}}
		\def\Wb{\bar{W}}
		\def\indeq{\quad{}}                     
		\def\indeqtimes{\indeq\indeq\indeq\indeq\times} 
		\def\period{.}                           
		\def\semicolon{\,;}                      
		\newcommand{\cD}{\mathcal{D}}
		\newcommand{\cH}{\mathcal{H}}
		\newcommand{\imp}{\Rightarrow}
		\newcommand{\tr}{\operatorname{tr}}
		\newcommand{\vol}{\operatorname{vol}}
		\newcommand{\id}{\operatorname{id}}
		\newcommand{\p}{\parallel}
		\newcommand{\norm}[1]{\Vert#1\Vert}
		\newcommand{\abs}[1]{\vert#1\vert}
		\newcommand{\nnorm}[1]{\left\Vert#1\right\Vert}
		\newcommand{\aabs}[1]{\left\vert#1\right\vert}

		\ifnum\showllabel=1
		\def\llabel#1{\label{#1}}
		\else
		\def\llabel#1{\notag}
		\fi

	\title[The incompressible limit of the isentropic fluids in the analytic spaces]{The incompressible limit of the isentropic fluids in the analytic spaces}
	
	\author[L.~Li]{Linfeng Li}
	\address{Department of Mathematics\\
		University of Southern California\\
		Los Angeles, CA 90089}
	\email{lli265@usc.edu}
	\author[Y.~Tan]{Ying Tan}
	\address{Department of Mathematics\\
		University of Southern California\\
		Los Angeles, CA 90089}
	\email{yingtan@usc.edu}
	
	\begin{abstract}
	We consider the low Mach number limit problem of the Euler equations for isentropic fluids in the analytic spaces. 
	We prove that, given general analytic initial data,
	the solution is uniformly bounded on a time interval independent of the small parameter and the incompressible limit holding in the analytic norm. 
	The same results extend more generally to Gevrey initial data with convergence holds in a Gevrey norm. The results extend the isentropic fluids in \cite{JKL1} to more general pressure laws.
	\end{abstract}
	
	\maketitle
	
\startnewsection{Introduction}{sec01} 
In this paper we consider the incompressible limit of the compressible Euler equations for isentropic fluids in $\mathbb{R}^3$. 
First, the study of the low Mach number limit problem involves uniform bounds and existence of solution for a time interval independent of a small parameter $\epsilon>0$, which is related to the Mach number.
Second, one can justify the convergence to the solution of the limiting equations as $\epsilon$ tends to zero. 

For the non-isentropic flows with general initial data, the first existence results were obtained by M\'etivier and Schochet in \cite{MS01} for the whole space and the torus, with convergence holding in the whole space.
Their approach relies on the fact that one can obtain uniform bounds by applying some suitable operators to the equations. 
For domains with boundaries, the existence results and convergence in exterior domains were given by Alazard in \cite{A05}. 
The above-mentioned results hold in the Sobolev spaces.
In a recent work \cite{JKL1}, we studied the low Mach number limit in the analytic spaces in the whole space. 
The proof relies on the elliptic regularity for the velocity and the energy estimates for the entropy, curl, and the time derivative components of the velocity. 
In another recent work \cite{JKL2}, the zero Mach number limit in the analytic spaces for the exterior domains with analytic boundaries was addressed.
For the isentropic flows with well-prepared initial data, it is well-established that the solution of the compressible isentropic Euler equations exists for a time interval independent of the Mach number, and converges to the corresponding incompressible Euler equations as Mach number tends to zero (cf.~\cite{KM1, KM2}).
For other works on the isentropic flows, we refer the reader to \cite{D1, DG, KM2, LM, M}.

The main goal of this paper is to provide a simple and new approach to the low Mach number limit of the isentropic Euler equations in the analytic spaces for more general pressure laws.
Our first main result shows that, given analytic initial data, the solution is uniformly bounded on a time interval, which is independent of the Mach number. 
Our second main theorem shows that the solution of a slightly compressible Euler equations converges to the limiting equations as Mach number tends to zero. 
As the equation of state in \cite{JKL1} is assumed to be entire,
we extend the results for isentropic fluids to the case where the equation of state is more general. 

The proof differs to \cite{JKL1} and the major simplification is that the analytic energy estimates are performed using only spatial derivatives of the velocity. As a result, the main difficulty is in obtaining the explicit expressions for an arbitrary partial differential derivatives of the matrix $E(\epsilon u^\epsilon)$ (cf.~the formulation \eqref{EQ44}) in terms of $u^\epsilon$.
We overcome this by using the multivariate Faa di Bruno formula (cf.~ \cite{Br, CS}), which generalizes chain rule to higher-order derivatives. 
Moreover, we introduce a power series version of the multivariate Faa di Bruno formula which in turn estimate the commutator terms.

The paper is organized as follows. 
In Section~\ref{sec02}, we introduce the low Mach number limit problem for the isentropic Euler equations and then formulate the symmetrized hyperbolic system.
In Section~\ref{sec03}, we define the analytic spaces and state our main results. The proof of the first theorem relies on Lemma~\ref{L01}, which is proved in  Section~\ref{sec04} assuming two key estimates on the convection term and the singular term, Lemma~\ref{L04} and \ref{L05}.
Section~\ref{sec05} introduces the multivariate Faa di Bruno formula and and contains the proof of Lemma~\ref{L04} and \ref{L05}.

\startnewsection{Reformulation}{sec02} 
We consider the compressible Euler equations describing the motion of an inviscid, isentropic fluid in $\mathbb{R}^3$
	\begin{align}
	&
	\partial_t \rho + v \cdot \nabla \rho + \rho \nabla \cdot v = 0, 	
	\label{Euler01}
	\\
	&
	\rho (\partial_t v + v\cdot \nabla v) + \nabla P = 0,
	\label{Euler02}
	\end{align}
where $\rho = \rho(t,x)\in \mathbb{R}_+$ is the density, 
$v = v(t,x) \in \mathbb{R}^3$ is the velocity, and
$P= P(t,x) \in \mathbb{R}_+$ is the pressure. 
To close the system \eqref{Euler01}--\eqref{Euler02}, we assume that the fluid is barotropic and the equation of state is given in the form
	\begin{align*}
	\rho= R (P)
	,
	\end{align*}
where $R\colon \mathbb{R}_+ \to \mathbb{R}_+$ is a smooth function satisfying $\partial R/\partial P >0$.
For instance, in the case of an ideal gaseous fluid, 
	\begin{align}
	\rho = K P^{1/\gamma}
	,	
	\label{EQ200}
	\end{align}
where $K>0$ is some constant and $\gamma>1$ is the adiabatic exponent. 

After some rescalings and change of variables (cf. \cite{MS01, A05, JKL1}), we are led to the symmetric hyperbolic system 
	\begin{align}
	&
	a(\partial_t p^\epsilon + v^\epsilon \cdot \nabla p^\epsilon) +
	\frac{1}{\epsilon} \nabla \cdot v^\epsilon 
	= 
	0,
	\label{EQ56}
	\\
	&
	r (\partial_t v^\epsilon + v^\epsilon \cdot \nabla v^\epsilon)
	+ 
	\frac{1}{\epsilon} \nabla p^\epsilon 
	= 
	0,
	\label{EQ57}
	\end{align}
where $v^\epsilon = v^\epsilon(t,x)\in \mathbb{R}^3$ is the velocity, $p^\epsilon = p^\epsilon (t,x) \in \mathbb{R}$ is the pressure variation, $\epsilon \in \mathbb{R}_+$ is the Mach number, $a=a(\epsilon p^\epsilon)$, and $r=r(\epsilon p^\epsilon)$.
For example, using the equation of state \eqref{EQ200}, we obtain
\begin{align}
	a=\frac{1}{\gamma}
	\llabel{EQ201}
\end{align}
and
\begin{align}
	r= K (\bar{P} e^{\epsilon p^\epsilon})^{\frac{1}{\gamma} -1}
	,
	\llabel{EQ202}
\end{align}
for some positive constant $\bar{P}$ which represents the reference state.
For non-isentropic fluids in \cite{JKL1, JKL2}, the variable coefficients $a$ and $r$ are assumed to be products of two positive entire real-analytic functions.  
Here we consider more general pressure law that $a$ and $r$ are positive real-analytic functions of the form
\begin{align}
	\begin{split}
		a
		= 
		a(\epsilon p^\epsilon)
		\comma
		r
		=
		r(\epsilon p^\epsilon)
		.
		\label{EQ114}
	\end{split}
\end{align}
Thus we have obtained the symmetrized version of the compressible Euler equations for isentropic fluids in $\mathbb{R}^3$, which reads
\begin{align}
	E(\partial_t u^\epsilon + v^\epsilon \cdot \nabla u^\epsilon) 
	+ 
	\frac{1}{\epsilon} L(\partial_x) u^\epsilon 
	= 
	0,
	\label{EQ01}
\end{align}
where $u^\epsilon=(p^\epsilon,v^\epsilon)^T$ and
\begin{align}
	\begin{split}
		E(\epsilon u^\epsilon) 
		= 
		\begin{pmatrix}
			a(\epsilon p^\epsilon)  & 0 \\
			0 & r(\epsilon p^\epsilon)  I_3 \\
		\end{pmatrix}
		\comma
		L(\partial_x) =
		\begin{pmatrix}
			0 & \dive \\
			\nabla & 0\\
		\end{pmatrix}
		.
		\label{EQ44}
	\end{split}
\end{align}
From here on, we focus on the formulation \eqref{EQ01} in $\mathbb{R}^3$ with initial data $u_0^\epsilon = (p_0^\epsilon, v_0^\epsilon)^T$.

\startnewsection{The main results}{sec03}
We assume that the initial data $(p_0^\epsilon, v_0^\epsilon)$ satisfies
 	\begin{align}
 	\sum_{m=0}^\infty\sum_{|\alpha| = m}
 	\frac{\tau_0^{m}}{(m-3)!}
 	\Vert \partial^\alpha (p_0^\epsilon, v_0^\epsilon) \Vert_{L^2(\mathbb{R}^3)}
 	\leq 
 	M_0
 	,	
 	\label{EQ43}
	\end{align}
where $\tau_0$, $M_0>0$ are fixed constants.
For $\tau>0$, we define the analytic space 
	\begin{align*}
	A(\tau)	=\{u\in C^\infty(\mathbb{R}^3) \colon \Vert u \Vert_{A(\tau)}<\infty \}
	,
	\end{align*}
where 
	\begin{align}
	\Vert u \Vert_{A(\tau)}
	=
	\sum_{m=0}^\infty\sum_{|\alpha| = m}
	\frac{\tau(t)^{m}}{(m-3)!}
	\Vert \partial^\alpha u \Vert_{L^2(\mathbb{R}^3)}
	.
	\label{EQ33}
	\end{align}
We emphasize that, as opposed to \cite{JKL1, JKL2}, the norm does not contain time derivative of the velocity.
In \eqref{EQ33} and below, we use the convention that $n! = 1$ for $n \in -\mathbb{N}$.
We define the analyticity radius function as
	\begin{align}
	\tau(t) = \tau_0- Kt	
	,
	\label{EQ54}
	\end{align}
where $K\geq 1$ is a sufficiently large parameter to be determined below. We shall work on the time interval $[0, T]$ such that
	\begin{align}
	T \leq \frac{\tau_0}{2K}	
	.
	\label{EQ59}
	\end{align}
In view of \eqref{EQ54}--\eqref{EQ59}, we have $\tau_0/2 \leq \tau(t) \leq \tau_0$ for $t\in [0, T]$.

The first main theorem provides a uniform in $\epsilon$ boundedness of the solutions in the analytic norm on a time interval, which is independent of $\epsilon$.

\cole
\begin{Theorem}
	\label{T01}
	Assume that the initial data $(p_0^\epsilon, v_0^\epsilon)$ satisfy \eqref{EQ43}, where $\tau_0$, $M_0>0$. Then there exist sufficiently small constants $\epsilon_0$, $T_0>0$, depending on $\tau_0$ and $M_0$, such that the solution $(p^\epsilon, v^\epsilon)$ of \eqref{EQ01} satisfy the estimate
	\begin{align}
	\Vert (p^\epsilon, v^\epsilon)(t) \Vert_{A(\tau)}
	\leq
	M
	\comma
	0<\epsilon \leq \epsilon_0
	\comma
	t\in [0, T_0]
	,
	\end{align}
where $\tau$ is as in \eqref{EQ54} and $K$ and $M$ are sufficiently large constants depending on $M_0$.
\end{Theorem}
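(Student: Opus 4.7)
The plan is to perform a single analytic energy estimate on the symmetrized system \eqref{EQ01} using only spatial derivatives, and to close it through the bootstrap supplied by the decreasing analyticity radius \eqref{EQ54}. For each multi-index $\alpha$ with $|\alpha|=m$, I would apply $\partial^\alpha$ to \eqref{EQ01} and pair the result with $\partial^\alpha u^\epsilon$ in $L^2(\mathbb{R}^3)$. The crucial algebraic input is that $L(\partial_x)$ in \eqref{EQ44} is skew-adjoint on $L^2$, so that the top-order singular contribution
\begin{align*}
\frac{1}{\epsilon}\int_{\mathbb{R}^3} L(\partial_x)\partial^\alpha u^\epsilon\cdot \partial^\alpha u^\epsilon\,dx=0
\end{align*}
vanishes identically and no $\epsilon^{-1}$ survives at the principal symbol; this is what makes the estimate uniform in $\epsilon$. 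Substituting $\partial_t u^\epsilon$ from \eqref{EQ01} into the commutator $[\partial^\alpha,E]\partial_t u^\epsilon$ in order to isolate the residual singular contribution, what remains is
\begin{align*}
\frac{1}{2}\frac{d}{dt}\langle E\partial^\alpha u^\epsilon,\partial^\alpha u^\epsilon\rangle=\frac{1}{2}\langle(\partial_t E)\partial^\alpha u^\epsilon,\partial^\alpha u^\epsilon\rangle-\langle E\partial^\alpha(v^\epsilon\cdot\nabla u^\epsilon),\partial^\alpha u^\epsilon\rangle-\frac{1}{\epsilon}\langle[\partial^\alpha,E]E^{-1}L(\partial_x)u^\epsilon,\partial^\alpha u^\epsilon\rangle.
\end{align*}

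Next I would multiply by $\tau(t)^m/(m-3)!$ and sum over $|\alpha|=m$ and $m\ge 0$ to reconstitute the norm in \eqref{EQ33}. Differentiating the weight in time produces a negative contribution proportional to $\dot\tau=-K$ times the analogous series with $\tau^m$ replaced by $m\tau^{m-1}$, i.e., a term controlling one extra spatial derivative. This yields an analytic dissipation of strength $K$, which I plan to use to absorb every derivative-losing contribution arising from the convection, from $\partial_t E$, and from the residual singular commutator. Because $L$ contributes nothing at top order, that commutator loses only one spatial derivative in total, and the $K$-gain is precisely the right size to dominate it. The shift by three in the weight $(m-3)!$ is dictated by the $H^{3/2+}\hookrightarrow L^\infty$ embedding in $\mathbb{R}^3$ and is what makes $A(\tau)$ behave as an algebra.

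The two nonlinear inputs required for this closure are exactly the content of Lemmas~\ref{L04} and \ref{L05}: the convection term is bounded by a product estimate in $A(\tau)$, while the singular commutator needs an analytic composition estimate for $E(\epsilon u^\epsilon)$. The latter is where the main obstacle lies. Since $E(\epsilon p^\epsilon)$ is the composition of a real-analytic but not entire function with $\epsilon p^\epsilon$, an arbitrary derivative $\partial^\alpha E(\epsilon p^\epsilon)$ must be expanded via the multivariate Faa di Bruno formula, and the resulting partition-indexed sums must be repackaged into convolutions of analytic norms while keeping precise track of the factorial weights. I plan to invoke the power series version of Faa di Bruno announced in Section~\ref{sec05} to carry out this bookkeeping cleanly and thereby deduce an estimate of the form $\|E(\epsilon u^\epsilon)\|_{A(\tau)}\le C$ under the smallness condition that $\epsilon\|p^\epsilon\|_{A(\tau)}$ stays below the radius of analyticity of $a$ and $r$.

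Combining all the ingredients will lead to a differential inequality schematically of the form
\begin{align*}
\frac{d}{dt}\|u^\epsilon(t)\|_{A(\tau)}+\bigl(K-C(1+\|u^\epsilon\|_{A(\tau)})\bigr)\,\|u^\epsilon(t)\|_{A(\tau),+1}\le C\bigl(1+\|u^\epsilon(t)\|_{A(\tau)}\bigr)^2,
\end{align*}
where $\|\cdot\|_{A(\tau),+1}$ denotes the one-derivative-higher series generated by $\dot\tau$. To close the estimate I would fix $M$ of order $M_0$, pick $K$ large enough that $K\ge 2C(1+M)$, and choose $\epsilon_0$ small enough that $\epsilon_0 M$ lies below the analyticity radius of $a$ and $r$. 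Then as long as $\|u^\epsilon(t)\|_{A(\tau)}\le M$ the dissipative parenthesis is nonnegative and may be dropped, leaving an ordinary differential inequality that keeps $\|u^\epsilon(t)\|_{A(\tau)}\le M$ throughout $[0,T_0]$ for some $T_0\le\tau_0/(2K)$ depending only on $\tau_0$ and $M_0$. Together with Lemma~\ref{L01} cited in the introduction, this will yield the uniform bound asserted in Theorem~\ref{T01}.
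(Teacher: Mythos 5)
Your proposal is correct and follows essentially the same route as the paper: a spatial-derivative-only analytic energy estimate in which skew-adjointness of $L(\partial_x)$ kills the top-order $\epsilon^{-1}$ term, the remaining singular commutators lose the $\epsilon^{-1}$ because $E$ depends on $\epsilon u^\epsilon$, the convection and singular commutators are controlled by Lemmas~\ref{L04} and \ref{L05} via the multivariate Faa di Bruno formula, and the $\dot\tau$-dissipation absorbs the derivative loss, yielding the differential inequality of Lemma~\ref{L01} and the bootstrap closing the bound. The only (immaterial) difference is that you work with the $E$-weighted energy $\langle E\partial^\alpha u,\partial^\alpha u\rangle$, whereas the paper multiplies by $E^{-1}$ first and handles the singular term by conjugating with $E^{-1/2}$ in \eqref{EQ31}; both variants lead to the same commutator estimates.
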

\colb

Our second main theorem states that the solution of \eqref{EQ01} converge to the solution of the stratified incompressible Euler equations
\begin{align}
	&
	\label{EQ110}
	r_0 (\partial_t v + v\cdot \nabla v) + \nabla \pi
	=
	0,
	\\
	&
	\label{EQ111}
	\dive v = 0,
\end{align}
as $\epsilon \to 0$, where $r_0 = r(0)$ is as in \eqref{EQ114}.
When the initial data are prepared, the convergence is proved in \cite{Sch86} for some $\pi$ such that $\nabla \pi$ belongs to $C^0 ([0,T], H^{s-1} (\Omega))$, where $\Omega\subset \mathbb{R}^3$ is a bounded domain and $s>1+3/2$.
We consider that the domain is $\mathbb{R}^3$ and the initial data is general (less constrained than prepared initial data).

\cole
\begin{Theorem}
	\label{T02}
	Assume that the initial data $(p_0^\epsilon, v_0^\epsilon)$ satisfy \eqref{EQ43} uniformly for fixed $\tau_0$, $M_0 >0$.
	Also, suppose that $v^\epsilon_0$ converges to $v_0$ in $H^3(\mathbb{R}^3)$.
	Then $(v^\epsilon, p^\epsilon)$ converge to $(v^{(\rm{inc})}, 0) \in L^\infty ([0,T_0], A(\delta))$ in $L^2 ([0,T], A(\delta))$, where $\delta \in (0, \tau_0]$ is a sufficiently small constant and
	$v^{(\rm{inc})}$ is the solution to \eqref{EQ110}--\eqref{EQ111}
	with initial data $w_0$, and $w_0$ is the unique solution of
	\begin{align*}
		&
		\dive w_0 = 0,
		\\
		&
		\curl (r_0 w_0) = \curl (r_0 v_0).
	\end{align*}
\end{Theorem}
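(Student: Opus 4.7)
\emph{Proof proposal.} The plan is to combine the uniform bound of Theorem~\ref{T01} with dispersion of acoustic waves in $\mathbb{R}^3$, following the strategy of M\'etivier--Schochet~\cite{MS01} as adapted to the analytic setting in~\cite{JKL1}. First I would invoke Theorem~\ref{T01} to fix a uniform bound $\sup_{t\in[0,T_0]}\|(p^\epsilon,v^\epsilon)(t)\|_{A(\delta)}\le M$ for some $\delta\in(0,\tau_0]$ and all $\epsilon\in(0,\epsilon_0]$. Along a subsequence, $(p^\epsilon,v^\epsilon)\rightharpoonup^\ast(p^\ast,v^\ast)$ in $L^\infty([0,T_0],A(\delta))$; rewriting \eqref{EQ57} as $\nabla p^\epsilon=-\epsilon\,r(\epsilon p^\epsilon)(\partial_t v^\epsilon+v^\epsilon\cdot\nabla v^\epsilon)$ shows $\nabla p^\epsilon\to 0$ distributionally, and since $p^\ast\in A(\delta)\subset L^2(\mathbb{R}^3)$, this forces $p^\ast=0$.

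Next, decompose $v^\epsilon=w^\epsilon+\nabla\phi^\epsilon$ with $w^\epsilon=\mathbb{P}v^\epsilon$ the Leray projection, which is a bounded Fourier multiplier on $A(\delta)$. Dividing \eqref{EQ57} by $r(\epsilon p^\epsilon)$ and applying $\mathbb{P}$ (with $\mathbb{P}\nabla=0$) yields
\begin{align*}
\partial_t w^\epsilon + \mathbb{P}(v^\epsilon\cdot\nabla v^\epsilon) + \mathbb{P}\!\left[\Bigl(\tfrac{1}{r(\epsilon p^\epsilon)}-\tfrac{1}{r_0}\Bigr)\tfrac{\nabla p^\epsilon}{\epsilon}\right]=0,
\end{align*}
whose last term is uniformly $O(1)$ because $1/r(\epsilon p^\epsilon)-1/r_0=O(\epsilon p^\epsilon)$. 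Thus $\partial_t w^\epsilon$ is bounded in a slightly weaker analytic norm $A(\delta')$ with $\delta'<\delta$ (one spatial derivative is lost to the convection), so an Aubin--Lions argument gives strong compactness of $w^\epsilon$ in $C([0,T_0],H^s_{\mathrm{loc}})$. Interpolating the decay against the uniform $A(\delta)$ bound via Cauchy-type inequalities $\|\partial^\alpha f\|_{L^2}\lesssim\|f\|_{L^2}^{\theta}\|\partial^{N}f\|_{L^2}^{1-\theta}$ upgrades convergence to $w^\epsilon\to v^{(\mathrm{inc})}$ in $L^2([0,T_0],A(\delta''))$ for any $\delta''<\delta'$.

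For the acoustic component, $(p^\epsilon,\nabla\phi^\epsilon)$ satisfies, up to $O(\epsilon p^\epsilon)$ perturbations of the coefficients, the constant-coefficient wave system $a_0\partial_t p^\epsilon+\epsilon^{-1}\Delta\phi^\epsilon=O(1)$ and $r_0\partial_t\nabla\phi^\epsilon+\epsilon^{-1}\nabla p^\epsilon=O(1)$, which is dispersive in $\mathbb{R}^3$. The M\'etivier--Schochet dispersion estimate~\cite{MS01} then yields $p^\epsilon\to 0$ and $\nabla\phi^\epsilon\to 0$ in $L^2([0,T_0],L^2_{\mathrm{loc}}(\mathbb{R}^3))$; since the analytic norm also controls spatial decay, the local-to-global step is free, and interpolation with the uniform analytic bound promotes this to $L^2([0,T_0],A(\delta))$.

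Combining the two pieces gives $v^\epsilon\to v^{(\mathrm{inc})}$ and $p^\epsilon\to 0$ in $L^2([0,T_0],A(\delta))$. Passing to the limit in the equation for $w^\epsilon$ identifies $v^{(\mathrm{inc})}$ as the divergence-free solution of \eqref{EQ110}--\eqref{EQ111} with $\nabla\pi=-(I-\mathbb{P})(r_0\,v^{(\mathrm{inc})}\cdot\nabla v^{(\mathrm{inc})})$. Since $v_0^\epsilon\to v_0$ in $H^3(\mathbb{R}^3)$ and $r_0$ is a constant, $w^\epsilon(0)=\mathbb{P}v_0^\epsilon\to\mathbb{P}v_0$, and $\mathbb{P}v_0$ is exactly the $w_0$ characterized by $\dive w_0=0$ and $\curl(r_0 w_0)=\curl(r_0 v_0)$. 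The main obstacle is the acoustic dispersion step: one must verify that the variable coefficients $a(\epsilon p^\epsilon),r(\epsilon p^\epsilon)$ perturb the constant-coefficient wave operator only by terms controllable in the analytic norm, and that the $L^2$-in-time decay can be transferred through the Cauchy-type interpolation without losing more than a fixed fraction of the analyticity radius.
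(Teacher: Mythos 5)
Your proposal follows essentially the same route as the paper, which gives no self-contained argument but defers to \cite[Section~7]{JKL1}: the uniform bound from Theorem~\ref{T01}, the Leray decomposition with M\'etivier--Schochet local decay \cite{MS01} for the acoustic part, and interpolation together with Stirling's formula to upgrade Sobolev convergence to the analytic norm $A(\delta)$. The one assertion to treat with care is that ``the analytic norm also controls spatial decay, [so] the local-to-global step is free'': $A(\delta)$ is an $L^2(\mathbb{R}^3)$-based norm and provides no tightness at spatial infinity, so passing from $L^2_{\mathrm{loc}}$ to global convergence requires the same justification as in the cited reference rather than coming for free.
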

\colb

In order to prove Theorem~\ref{T01}, we establish the following a~priori estimate of the (modified) velocity.
\cole
\begin{Lemma}
	\label{L01}
	Assume the initial data $(p_0^\epsilon, v_0^\epsilon)$ satisfy \eqref{EQ43}, where $M_0, \tau_0>0$. 
	Then there exist constants $\epsilon_0, T_0>0$,
	such that for all $\epsilon \in (0, \epsilon_0]$, the norm
	\begin{align}
		M_\epsilon(T) 
		:= 
		\sup_{t\in [0,T]} 
		\Vert (p^\epsilon(t), v^\epsilon (t)) \Vert_{A(\tau(t))}
	\end{align}
	satisfies the estimate
	\begin{align}
		M_\epsilon (t) 
		\leq
		C
		+
		Ct M_\epsilon (t)
		,
	\end{align}
	for $t\in [0,T_0]$, provided 
	\begin{align}
		K \geq Q(M_\epsilon (T)),
	\end{align} 
	where $K$ is as in \eqref{EQ54} and $Q$ is a nonnegative continuous function.
\end{Lemma}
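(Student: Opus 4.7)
The plan is to derive the bound via an $L^2$ energy estimate performed on the system \eqref{EQ01} after applying an arbitrary spatial multi-derivative $\partial^\alpha$, and then to reconstruct the analytic norm $M_\epsilon$ by multiplying by $\tau(t)^m/(m-3)!$ and summing over $|\alpha|=m$, $m\ge 0$. Applying $\partial^\alpha$ to \eqref{EQ01} gives
\begin{equation*}
  E\,\partial_t\partial^\alpha u^\epsilon
  + \tfrac{1}{\epsilon}\,L(\partial_x)\partial^\alpha u^\epsilon
  = -[\partial^\alpha,E]\,\partial_t u^\epsilon
  - \partial^\alpha\bigl(E\,v^\epsilon\cdot\nabla u^\epsilon\bigr).
\end{equation*}
Pairing with $\partial^\alpha u^\epsilon$ in $L^2$, the crucial cancellation is that $L(\partial_x)$ is antisymmetric, so the singular term $(1/\epsilon)\int L\partial^\alpha u^\epsilon\cdot\partial^\alpha u^\epsilon$ vanishes. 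What remains is an identity for $\frac{d}{dt}\!\int E|\partial^\alpha u^\epsilon|^2$ involving only $\partial_t E$, the commutator $[\partial^\alpha,E]\,\partial_t u^\epsilon$, and the convection $\partial^\alpha(E v^\epsilon\cdot\nabla u^\epsilon)$; since $E\gtrsim 1$ uniformly in $\epsilon$, this weighted quantity is equivalent to $\|\partial^\alpha u^\epsilon\|_{L^2}^2$.

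I would then multiply the derivative-by-derivative estimate by $\tau^m/(m-3)!$ and sum. The time derivative of $M_\epsilon$ splits into a piece where $\partial_t$ hits $\|\partial^\alpha u^\epsilon\|_{L^2}$, controlled by the above identity, and a piece where $\partial_t$ hits the prefactor $\tau^m$, which produces $\dot\tau=-K$ times a sum with one additional power of $\tau$ and of $m$, i.e.\ an effective gain of one spatial derivative. This negative contribution is exactly the source of the hypothesis $K\ge Q(M_\epsilon)$: it will absorb the extra derivatives produced by the convection and commutator terms on the right-hand side.

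The convection contribution is handled by Lemma~\ref{L04}, which bounds the analytic-norm sum of $\int\partial^\alpha(E v^\epsilon\cdot\nabla u^\epsilon)\cdot\partial^\alpha u^\epsilon$ by $Q(M_\epsilon)M_\epsilon$ plus a one-derivative-higher sum that is swallowed by the $-K$ gain once $K\ge Q(M_\epsilon)$. The commutator $[\partial^\alpha,E]\partial_t u^\epsilon$ is the genuinely delicate piece: naively $\partial_t u^\epsilon=O(1/\epsilon)$ by the equation, so one might fear an unbounded-as-$\epsilon\to 0$ contribution, but because $E=E(\epsilon p^\epsilon)$, every derivative landing on $E$ carries a compensating factor of $\epsilon$; the cancellation is made quantitative by the power-series multivariate Faa di Bruno formula underlying Lemma~\ref{L05}, which controls the analytic-norm sum of this commutator also by $Q(M_\epsilon)M_\epsilon$ plus a derivative-losing term absorbed by $-K$. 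The $\partial_t E$ piece is innocuous since $\partial_t E=E'(\epsilon p^\epsilon)\cdot\epsilon\partial_t p^\epsilon$, and $\epsilon\partial_t p^\epsilon$ is $O(1)$ in $\epsilon$ by the first equation of the symmetrized system.

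Putting these three estimates together yields $\frac{d}{dt}M_\epsilon\leq Q(M_\epsilon)$ on $[0,T_0]$ provided $K\geq Q(M_\epsilon(T))$. Integrating in time and using the initial bound \eqref{EQ43}, which gives $M_\epsilon(0)\leq C$ depending only on $M_0$ and $\tau_0$, produces $M_\epsilon(t)\leq C+tQ(M_\epsilon(t))\leq C+CtM_\epsilon(t)$, which is the claimed inequality and closes the continuity argument once $t$ is small enough. The hardest step is unambiguously the commutator estimate: what looks like an $O(1/\epsilon)$ obstruction must be shown to be uniformly $O(1)$, and making the $\epsilon$-cancellation explicit is precisely what forces the passage from termwise bounds to the power-series form of the multivariate Faa di Bruno identity, expanding $\partial^\beta E$ in the derivatives $\partial^\gamma(\epsilon p^\epsilon)$ and tracking the $\epsilon$ factors produced by each differentiation.
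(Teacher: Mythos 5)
Your proposal is correct in substance and reaches the same conclusion by the same underlying mechanisms, but it organizes the energy estimate differently from the paper. You keep the system in the form \eqref{EQ01} and use the $E$-weighted energy $\int E|\partial^\alpha u|^2$, so the singular term $\frac{1}{\epsilon}\langle L(\partial_x)\partial^\alpha u,\partial^\alpha u\rangle$ vanishes exactly by skew-adjointness of the constant-coefficient operator $L(\partial_x)$; the price is that the commutator you must estimate is $[\partial^\alpha,E]\partial_t u$, which contains $\partial_t u=O(1/\epsilon)$ and forces you to substitute the equation back in before the compensating $\epsilon$ from each derivative of $E(\epsilon u)$ becomes visible. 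The paper instead multiplies by $E^{-1}$ first (see \eqref{EQ30}), so that all commutators, namely $[v\cdot\nabla,\partial^\alpha]u$ and $\frac{1}{\epsilon}[E^{-1}L(\partial_x),\partial^\alpha]u$, involve only spatial derivatives and match Lemmas~\ref{L04} and \ref{L05} verbatim; the price there is that the singular term no longer cancels outright and requires the $E^{-1/2}$ conjugation in \eqref{EQ31}, with the residual commutator $[L(\partial_x),E^{-1/2}]$ again absorbing the $1/\epsilon$ through the $\epsilon$-dependence of $E$. Both routes rest on the same three pillars you correctly identify: skew-adjointness, the $\epsilon$-gain per derivative of $E$, and absorption of the derivative-losing terms by $\dot\tau=-K$ under $K\geq Q(M_\epsilon(T))$.

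Two points in your plan need more care than you give them. First, Lemma~\ref{L04} as stated bounds the sum over $[v\cdot\nabla,\partial^\alpha]u$ without the $E$ weight; your convection term $\partial^\alpha(Ev\cdot\nabla u)$ produces a commutator $[\partial^\alpha,Ev\cdot\nabla]u$ in which derivatives also fall on $E$, so you would need to run the Faa di Bruno expansion of Lemma~\ref{L02} on this term as well, not only on the singular one. Second, after substituting $\partial_t u=-v\cdot\nabla u-\frac{1}{\epsilon}E^{-1}L(\partial_x)u$ into $[\partial^\alpha,E]\partial_t u$, the factor $\partial^{\alpha-\beta}(E^{-1}L(\partial_x)u)$ itself requires a second Leibniz--Faa di Bruno expansion, so your commutator estimate is genuinely heavier than Lemma~\ref{L05}, even though it closes with the same tools. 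Finally, your last display should read: after absorption the surviving term is $C\Vert u\Vert_{A(\tau)}$, linear in the norm, which integrates to $C+CtM_\epsilon(t)$; the intermediate inequality $M_\epsilon(t)\leq C+tQ(M_\epsilon(t))$ is not what one gets and would not imply the claim for general $Q$.
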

\colb
For the proof of Theorem~\ref{T01} given Lemma~\ref{L01}, we refer the reader to \cite{A05, JKL1}. 
As a consequence of Theorem~\ref{T01},
the proof of Theorem~\ref{T02} is analogous to the one in \cite[Section~7]{JKL1} by using the interpolation inequality and the Stirling's formula, and thus we omit further details.

In the rest of the paper, the symbol $C$ denotes a generic constant depending on $M_0$ and $\tau_0$, which may vary from inequality to inequality. 
For simplicity of the notation, we omit the superscript $\epsilon$, and write $p$ and  $v$ for $p^\epsilon$ and $v^\epsilon$.

\cole
\begin{Remark} 
	{\rm
	\label{R01}
	(Boundedness of Sobolev norm)
	By \eqref{EQ43} and \cite[Theorem~1.1]{A05}, the $H^3$ norm of $(p^\epsilon, v^\epsilon)$ can be estimated by a constant on a time interval $[0,T']$, where $T'>0$ depends only on $\tau_0$ and $M_0$. 
	In particular, if $F$ is a smooth function of $u^\epsilon=(p^\epsilon, v^\epsilon)$, then there exists constant $C>0$ depending on $F$, such that 
	\begin{align*}
		\Vert F(\epsilon u^\epsilon) \Vert_{L^\infty_x}
		\leq
		C
		\comma
		t\in [0, T']
		\comma
		\epsilon \in (0,1]
		.
	\end{align*}
}
\end{Remark}
\colb
From here on, we shall work on the time interval $[0,T]$ where $T \in [0,T']$.

\cole
\begin{Remark}
	{\rm
	\label{R02}
	(low Mach number limit in a Gevrey norm)
	Assume that the initial data is Gevrey regular, we may generalize Theorem~\ref{T01} and \ref{T02} to the Gevrey norms by showing uniform boundedness of the solution and convergence in Gevrey spaces.
}
\end{Remark}
\colb
The proofs of Remark~\ref{R02} are analogous to those in \cite[Section~9]{JKL1}, and thus we omit the details.

\startnewsection{Proof of Lemma~\ref{L01}}{sec04}
In this section we prove Lemma~\ref{L01} and thus complete the proof of Theorem~\ref{T01}.
Since the matrix $E$ is symmetric positive definite, we may rewrite the equation \eqref{EQ01} as
	\begin{align}
	\partial_t u + v \cdot \nabla u + \frac{1}{\epsilon} E^{-1} L(\partial_x) u 
	= 
	0.
	\label{EQ30}
	\end{align}
Fix $m\in \mathbb{N}_0$ and $\alpha \in \mathbb{N}_0^3$ with $|\alpha| = m$.
We apply $\partial^\alpha$ to the equation \eqref{EQ30} to obtain
	\begin{align}
	\partial_t \partial^\alpha u
	+
	v\cdot \nabla \partial^\alpha u 
	+
	\frac{1}{\epsilon} E^{-1} L(\partial_x) \partial^\alpha u 
	=
	F_\alpha
	,
	\label{EQ34}
	\end{align}
where 
	\begin{align*}
	F_\alpha
	=
	[v\cdot \nabla, \partial^\alpha] u
	+
	\frac{1}{\epsilon}
	[E^{-1}L(\partial_x), \partial^\alpha]u
	.
	\end{align*}
Taking the $L^2$-inner product of \eqref{EQ34} with $\partial^\alpha u$ and using the Cauchy-Schwarz and H\"older inequalities, we obtain
	\begin{align}
	\frac{1}{2} \frac{d}{dt}	
	\Vert \partial^\alpha u \Vert_{L^2}^2
	+
	\frac{1}{\epsilon} \langle
	E^{-1} L(\partial_x) \partial^\alpha u, \partial^\alpha u
	\rangle
	\leq
	C
	\Vert \nabla v \Vert_{L^\infty_x} \Vert \partial^\alpha u \Vert_{L^2}^2
	+
	C
	\Vert F_\alpha \Vert_{L^2} \Vert \partial^\alpha u \Vert_{L^2}
	,
	\label{EQ32}
	\end{align}
where $\langle \cdot, \cdot \rangle$ denotes the scalar product in $L^2$.
The second term on the left side can be rewritten as
	\begin{align}
	\frac{1}{\epsilon}
	\langle	E^{-1} L(\partial_x) \partial^\alpha u,\partial^\alpha u \rangle
	=
	\frac{1}{\epsilon}
	\langle	L(\partial_x) E^{-1/2} \partial^\alpha u, E^{-1/2} \partial^\alpha u \rangle
	-
	\frac{1}{\epsilon}
	\langle	[L(\partial_x), E^{-1/2} ] \partial^\alpha u, E^{-1/2} \partial^\alpha u \rangle
	.
	\label{EQ31}
	\end{align}
Since the operator $L(\partial_x)$ is formally skew-adjoint, the first term on the right side of \eqref{EQ31} is cancelled out. 
For the second term on the right side of \eqref{EQ31}, using the product rule and chain rule, it suffices to estimate the lower-order terms. 
Note that the matrix $E$ depends on $\epsilon u$, the factor $1/\epsilon$ is cancelled out. 
From \eqref{EQ32}--\eqref{EQ31}, we use the H\"older inequality and Remark~\ref{R01} to get
	\begin{align*}
	\frac{d}{dt} \Vert \partial^\alpha u \Vert_{L^2}
	\leq
	C \Vert \partial^\alpha u \Vert_{L^2}
	+		
	C\Vert F_\alpha \Vert_{L^2} 
	.
	\end{align*}
Summing over $|\alpha| = m$, we arrive at
	\begin{align}
	\frac{d}{dt} 
    \sum_{|\alpha|= m}
	\Vert \partial^\alpha u \Vert_{L^2}
	\leq
	C \sum_{|\alpha|= m}
	\Vert F_\alpha \Vert_{L^2}
	+
	C \sum_{|\alpha|= m}
	 \Vert \partial^\alpha u \Vert_{L^2}
	.
	\label{EQ77}
	\end{align}
Denote the dissipative analytic norm by
\begin{align*}
	\Vert u \Vert_{\tilde{A}(\tau)}
	=
	\sum_{m=1}^\infty \sum_{|\alpha|=m}
	\frac{m\tau^{m-1}}{(m-3)!}
	\Vert \partial^\alpha u \Vert_{L^2}
	.
\end{align*}
Using the above notation and \eqref{EQ33}, the estimate \eqref{EQ77} implies that
	\begin{align}
	\begin{split}
	\frac{d}{dt} \Vert u \Vert_{A(\tau)}
	&
	=	
	\dot{\tau} \Vert u \Vert_{\tilde{A}(\tau)}
	+
	\sum_{m=0}^\infty \sum_{|\alpha|= m} 
	\frac{\tau^{m}}{(m-3)!} \frac{d}{dt} \Vert \partial^\alpha u \Vert_{L^2}
	\\
	&
	\leq
	\dot{\tau} \Vert u \Vert_{\tilde{A}(\tau)}
	+
	C \Vert u \Vert_{A(\tau)}
	+
	\mathcal{I}_1
	+
	\mathcal{I}_2
	,
	\label{EQ78}
	\end{split}
	\end{align}
where
	\begin{align}
	\mathcal{I}_1
	=	
	C
	\sum_{m=1}^\infty \sum_{|\alpha|= m} 
	\frac{\tau^{m}}{(m-3)!} 
	\Vert [v\cdot \nabla, \partial^\alpha] u \Vert_{L^2}
	,
	\label{EQ112}
	\end{align}
and
	\begin{align}
	\mathcal{I}_2
	=	
	\frac{C}{\epsilon}
	\sum_{m=1}^\infty \sum_{|\alpha|= m} 
	\frac{\tau^{m}}{(m-3)!} 
	\Vert [E^{-1}L(\partial_x), \partial^\alpha]u \Vert_{L^2}
	.
	\label{EQ113}
	\end{align}

In order to estimate $\mathcal{I}_1$, we use the following lemma, the proof of which is given in Section~\ref{sec05} below.

\cole
\begin{Lemma}
	\label{L04}
	There exists a constant $T_0>0$, depending on $\tau_0$ and $M_0$, such that
	\begin{align}
	\mathcal{I}_1
	\leq
	C\Vert u \Vert_{A(\tau)} \Vert u \Vert_{\tilde{A}(\tau)}
	\comma
	\epsilon \in (0,1]
	\comma
	t\in [0, T_0]
	,
	\end{align}
	for some constant $C>0$.
\end{Lemma}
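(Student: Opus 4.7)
The plan is to expand the commutator using the multivariate Leibniz rule,
\begin{align*}
[v \cdot \nabla, \partial^\alpha] u
=
\sum_{\substack{\beta \leq \alpha \\ |\beta| \geq 1}}
\binom{\alpha}{\beta}\, \partial^\beta v \cdot \nabla \partial^{\alpha-\beta} u,
\end{align*}
and then apply H\"older's inequality, placing the $L^\infty$ norm on whichever factor carries fewer derivatives. I would dichotomize the $\beta$-sum into a low-frequency regime $1 \leq |\beta| \leq m/2$ and a high-frequency regime $|\beta| > m/2$.

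In the low regime, I would invoke the three-dimensional Sobolev embedding $H^2 \hookrightarrow L^\infty$ to bound $\Vert \partial^\beta v \Vert_{L^\infty} \leq C \sum_{|\mu|\leq 2} \Vert \partial^{\beta+\mu} v \Vert_{L^2}$, while retaining $\Vert \nabla \partial^{\alpha-\beta} u \Vert_{L^2}$ as is. The crux is the factorial inequality
\begin{align*}
\binom{\alpha}{\beta}\, \frac{\tau^m}{(m-3)!}
\leq
C \cdot \frac{\tau^{k+|\mu|}}{(k+|\mu|-3)!} \cdot \frac{j\, \tau^{j-1}}{(j-3)!},
\end{align*}
where $k = |\beta|$ and $j = m-k+1$, with the surplus factor $\tau^{|\mu|}$ absorbed into constants since $\tau(t) \in [\tau_0/2, \tau_0]$ on $[0,T_0]$. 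Using $\binom{\alpha}{\beta} \leq \binom{m}{k}$ and canceling factorials, this reduces to the uniform bound
\begin{align*}
\frac{m(m-1)(m-2)}{k(k-1)(k-2)\,(m-k-1)(m-k)(m-k+1)} \leq C,
\end{align*}
interpreted with the convention $n! = 1$ for $n \in -\mathbb{N}$. Since $k \leq m/2$ yields $m-k \geq m/2$, the $(m-k)$-factors in the denominator dominate and supply the uniform constant. Reindexing $\beta' = \beta + \mu$ and $\gamma' = \alpha - \beta + e_i$ (one index per component of $\nabla$) covers each target multi-index only $O(1)$ times, so the double sum factorizes and yields $C\,\Vert v \Vert_{A(\tau)}\,\Vert u \Vert_{\tilde{A}(\tau)}$.

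The high-frequency regime is handled symmetrically by placing the Sobolev embedding on $\nabla \partial^{\alpha-\beta} u$ instead, with the roles of $k$ and $m-k$ exchanged in the factorial inequality. Combining the two contributions gives $\mathcal{I}_1 \leq C\,\Vert v \Vert_{A(\tau)}\,\Vert u \Vert_{\tilde{A}(\tau)} \leq C\,\Vert u \Vert_{A(\tau)}\,\Vert u \Vert_{\tilde{A}(\tau)}$, using that $v$ is a component of $u$. The main obstacle is the combinatorial verification: the factorial ratio is immediate once $k \geq 3$ and $m-k \geq 3$, but a handful of borderline cases (where one of $k$, $m-k$, or their $|\mu|$-shifts falls below $3$ and the convention $n! = 1$ activates) must be confirmed by direct computation. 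The uniform $H^3$ bound on $(p^\epsilon, v^\epsilon)$ from Remark~\ref{R01} enters to control the Sobolev embedding constants uniformly on $[0, T_0]$.
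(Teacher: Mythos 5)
Your proposal is correct and follows essentially the same route as the paper: Leibniz expansion of the commutator, dichotomy at $|\beta|\le m/2$ versus $|\beta|>m/2$, Sobolev control of the lower-order factor in $L^\infty$, the combinatorial bound $\binom{\alpha}{\beta}\le\binom{|\alpha|}{|\beta|}$ to reduce the factorial ratio to a uniformly bounded quantity, and the product formula plus discrete Young's inequality to factorize the sums. The only cosmetic difference is that the paper uses the multiplicative interpolation $\Vert f\Vert_{L^\infty}\lesssim\Vert f\Vert_{L^2}^{1/4}\Vert D^2 f\Vert_{L^2}^{3/4}$ with fractional powers of the analytic weights where you use the additive embedding $H^2\hookrightarrow L^\infty$ with a sum over $|\mu|\le 2$; both lead to the same bound by $\Vert u\Vert_{A(\tau)}\Vert u\Vert_{\tilde A(\tau)}$.
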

\colb

The following lemma, the proof of which is given in Section~\ref{sec05} below, shall be used to estimate $\mathcal{I}_2$.
\cole
\begin{Lemma}
	\label{L05}
	There exist sufficiently small constants $\epsilon_0, T_0>0$, depending on $\tau_0$ and $M_0$ such that 
	\begin{align}
		\mathcal{I}_2	
		\leq
		C \Vert u \Vert_{\tilde{A}(\tau)}
		+
		C  \Vert u \Vert_{A(\tau)} \Vert u \Vert_{\tilde{A}(\tau)} 
		\comma
		\epsilon \in (0, \epsilon_0]
		\comma
		t\in [0, T_0]
		,
	\end{align}
	for some constant $C>0$.
\end{Lemma}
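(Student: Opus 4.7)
The plan is to expand the commutator via the Leibniz rule. Since $L(\partial_x)$ has constant coefficients it commutes with $\partial^\alpha$, giving
\begin{align*}
[E^{-1}L(\partial_x),\partial^\alpha]u
= -\sum_{0<\beta\leq\alpha}\binom{\alpha}{\beta}\,\partial^\beta(E^{-1}(\epsilon u))\,\partial^{\alpha-\beta}(Lu).
\end{align*}
Because $E^{-1}$ is evaluated at $\epsilon u$, the multivariate Faà di Bruno formula (introduced in Section~\ref{sec05}) expands $\partial^\beta(E^{-1}(\epsilon u))$ as a sum over set partitions $\pi$ of the positions in $\beta$, with each term of the form $\epsilon^{|\pi|}(E^{-1})^{(|\pi|)}(\epsilon u)\prod_{B\in\pi}\partial^{B}u$. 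Since $|\pi|\geq 1$, the prefactor $\epsilon^{|\pi|}$ absorbs the explicit $1/\epsilon$ in $\mathcal{I}_2$, leaving $\epsilon^{|\pi|-1}\leq 1$. By the analyticity of $E^{-1}$ at $0$ and Remark~\ref{R01}, each $(E^{-1})^{(k)}(\epsilon u)$ is bounded in $L^\infty_x$ uniformly in $\epsilon\in(0,\epsilon_0]$ once $\epsilon_0$ is small.

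The $|\beta|=1$ piece is isolated first to produce the first term $C\Vert u\Vert_{\tilde{A}(\tau)}$ of the lemma. Here the unique partition satisfies $|\pi|=1$, so after cancellation with $1/\epsilon$ one obtains a pointwise bound $C|\partial^\beta u|\,|\partial^{\alpha-\beta}Lu|$; placing $\partial^\beta u$ in $L^\infty$ (controlled via $H^3\hookrightarrow W^{1,\infty}(\mathbb{R}^3)$ together with Remark~\ref{R01}) and $\partial^{\alpha-\beta}Lu$ in $L^2$ gives $C\Vert\partial^{\alpha-\beta}Lu\Vert_{L^2}$. Now $\sum_{|\beta|=1,\,\beta\leq\alpha}\binom{\alpha}{\beta}=m$; reindexing by the single extra derivative carried by $L$ converts the weight $\tau^m/(m-3)!$ into $\tau\cdot m\tau^{m-1}/(m-3)!$, and summation over $m$ yields $C\tau_0\Vert u\Vert_{\tilde{A}(\tau)}\leq C\Vert u\Vert_{\tilde{A}(\tau)}$.

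For the remaining $|\beta|\geq 2$ terms I place $\partial^{\alpha-\beta}(Lu)$ together with one distinguished block $\partial^{B_\ast}u$ in $L^2$, and estimate each of the remaining blocks in $L^\infty$. The combinatorial sum over $\alpha$, over $\beta\leq\alpha$, and over partitions $\pi$ of $\beta$, weighted by $\tau^{|\alpha|}/(|\alpha|-3)!$, is then reorganized via the power series version of the multivariate Faà di Bruno formula from Section~\ref{sec05}. This identifies the sum with a suitable convolution of generating series: the distinguished $L^2$ block contributes an analytic factor $\Vert u\Vert_{\tilde{A}(\tau)}$, while each of the $|\pi|-1$ remaining blocks contributes a factor $\Vert u\Vert_{A(\tau)}$. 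The total bound takes the form $\sum_{k\geq 1}\epsilon^{k-1}C^k\Vert u\Vert_{A(\tau)}^{k}\Vert u\Vert_{\tilde{A}(\tau)}$, and upon choosing $\epsilon_0$ small relative to the a~priori bound $M_\epsilon(T)$ this geometric-type series converges and is dominated by $C\Vert u\Vert_{A(\tau)}\Vert u\Vert_{\tilde{A}(\tau)}$.

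The principal obstacle is the combinatorial bookkeeping in the last step: the Leibniz binomial coefficients $\binom{\alpha}{\beta}$, the Faà di Bruno partition multiplicities, and the factorial weights $1/(m-3)!$ in $A(\tau)$ must combine so that the analytic norm acts multiplicatively on products of the form $\prod_{B\in\pi}\partial^B u\,\partial^{\alpha-\beta}(Lu)$. Verifying this multiplicativity at the level of power series is precisely the role of the Faà di Bruno formulation established in Section~\ref{sec05}, and it is what avoids having to estimate the partition sum term by term.
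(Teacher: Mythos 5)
Your overall architecture coincides with the paper's: Leibniz rule, the multivariate Fa\`a di Bruno expansion of $\partial^\beta(E^{-1}(\epsilon u))$ with the factor $\epsilon^{i}$ cancelling the singular $1/\epsilon$, uniform bounds on $h^{(i)}(\epsilon u)$ from analyticity together with Remark~\ref{R01}, and a resummation of the partition sum via the power-series Fa\`a di Bruno formula (Lemma~\ref{L03}), made convergent by taking $\epsilon_0$ small relative to $M_\epsilon(T)$. One genuine (and legitimate) difference: you extract the term $C\Vert u\Vert_{\tilde{A}(\tau)}$ from the $|\beta|=1$ piece using the uniform $W^{1,\infty}$ bound supplied by Remark~\ref{R01}, whereas the paper obtains it from the $i\ge 2$ terms (where the surplus $\epsilon^{i-1}$ absorbs the extra powers of $\Vert u\Vert_{A(\tau)}$) and charges the $i=1$ term to $C\Vert u\Vert_{A(\tau)}\Vert u\Vert_{\tilde{A}(\tau)}$; both routes are consistent with the stated conclusion.

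There is, however, a genuine gap in your treatment of the $|\beta|\ge 2$ terms. First, you propose to place $\partial^{\alpha-\beta}(Lu)$ ``together with one distinguished block'' in $L^2$; H\"older does not permit two factors of a product to be measured in $L^2$ when bounding the $L^2$ norm of that product, so this step fails as stated. The paper's substitute is the interpolation $\Vert f\Vert_{L^\infty}\le C\Vert f\Vert_{L^2}^{1/4}\Vert D^2 f\Vert_{L^2}^{3/4}$, which re-expresses every $L^\infty$ factor through weighted $L^2$ norms compatible with $A(\tau)$ and $\tilde{A}(\tau)$. Second, and more importantly, you omit the splitting into $|\beta|\le[m/2]$ and $|\beta|\ge[m/2]+1$, with the $L^2$ versus interpolated roles of $\nabla\partial^{\alpha-\beta}u$ and of the largest block $\partial^{\lambda_s}p$ exchanged between the two regimes. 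That split is precisely what makes the combinatorial factors (the quantities $\mathcal{C}_{m,j,i,s}$ and $\mathcal{D}_{m,j,i,s}$, i.e.\ the interaction of $\binom{\alpha}{\beta}$, $\beta!$, and the weights $1/(m-3)!$) uniformly bounded, via $Cm^3/(m-j)^3\le C$ in one regime and $Cm^3i^3/|\beta|^3\le Ci^3$ in the other; without deciding which factor carries the majority of the derivatives, these factors blow up. Lemma~\ref{L03} only resums the partition sum after these factors have been controlled; it does not by itself deliver the ``multiplicativity'' you invoke. Consequently the final series $\sum_{k\ge1}\epsilon^{k-1}C^k\Vert u\Vert_{A(\tau)}^{k}\Vert u\Vert_{\tilde{A}(\tau)}$ does not follow from the steps you have written down.
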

\colb

It follows from \eqref{EQ78}--\eqref{EQ113} and Lemmas \ref{L04} and \ref{L05} that
	\begin{align}
	\frac{d}{dt} \Vert u \Vert_{A(\tau)}
	\leq	
	\Vert u\Vert_{\tilde{A}(\tau)}
	(\dot{\tau} + C \Vert u\Vert_{A(\tau)}
	+
	C)
	+
	C \Vert u\Vert_{A(\tau)}
	\comma
	\epsilon \in (0,\epsilon_0]
	\comma
	t\in[0, T_0]
	,
	\label{EQ76}
	\end{align}
for some constant $C>0$.
Now, assume that the radius $\tau(t)$ decreases sufficiently fast so that the factor next to $\Vert u\Vert_{\tilde{A}(\tau)}$ is less than or equal to $0$, namely,
\begin{align*}
	K \geq Q(M_\epsilon (T))
	,
\end{align*} 
for some nonnegative continuous function $Q$.
Integrating \eqref{EQ76} in time from $0$ to $t$, we get
\begin{align*}
	\Vert u(t)\Vert_{A(\tau)}
	\leq
	C
	+
	CtM_\epsilon (t)
	.
\end{align*}
Thus, the proof of Lemma~\ref{L01} is concluded.

\startnewsection{Commutator estimates}{sec05}
Before we prove Lemma~\ref{L04} and \ref{L05},
we introduce the multivariate Faa di Bruno formula which shall be used throughout this section.
For multi-indices $\alpha, \beta \in \mathbb{N}_0^3$ where
$\alpha = (\alpha_1, \alpha_2, \alpha_3)$ and 
$\beta = (\beta_1, \beta_2, \beta_3)$, we introduce a linear order on $N_0^3$ by writing $\alpha \prec \beta$ provided one of the following holds:
\begin{enumerate}[label=(\roman*)]
	\item $|\alpha| < |\beta|$;
	\item $|\alpha| = |\beta|$ and $\alpha_1 < \beta_1$;
	\item $|\alpha| = |\beta|$, $\alpha_1=\beta_1, \ldots, \alpha_k = \beta_k$ and $\alpha_{k+1} < \beta_{k+1}$ for some $1 \leq k <3$.
\end{enumerate}
We write
\begin{align*}
	\bfx^\alpha = x_1^{\alpha_1} x_2^{\alpha_2} x_3^{\alpha_3}	
	,
\end{align*}
where $\bfx= (x_1, x_2, x_3) \in \mathbb{R}^3$.
With the above notations, we recall \cite[Theorem~2.1]{CS}.

\cole
\begin{Lemma}{(Multivariate Faa di Bruno)}
	\label{L02}
	Let $g\colon \mathbb{R}^3 \to \mathbb{R}$ be a scalar function, that is smooth in a neighborhood of some point $x_0\in \mathbb{R}^3$. Let
	$h\colon \mathbb{R} \to \mathbb{R}$ be a scalar function, smooth in a neighborhood of $y_0 = g(x_0)$.
	Define $f(x) = h(g(x))$. Then for any $\beta \in \mathbb{N}_0^3$ with $|\beta| \geq 1$, we have
	\begin{align}
		\partial^\beta f(x_0)
		=
		\sum_{i=1}^{|\beta|}
		h^{(i)}(y_0)
		\sum_{s=1}^{|\beta|}
		\sum_{P_s (i, \beta)} \beta! \prod_{l=1}^s \frac{( \partial^{\lambda_l}g(x_0) )^{k_l}}{k_l! \lambda_l!^{k_l}}
		,
	\end{align} 
	where
	\begin{align}
		\begin{split}
			P_s (i, \beta)	
			=
			\big\{ (k_1, \ldots, k_s; \lambda_1, \ldots, \lambda_s) \in \mathbb{N} \times \ldots \times \mathbb{N} \times \mathbb{N}_0^3 \times \ldots \times \mathbb{N}_0^3 \colon
			\\
			\mathbf{0} \prec \lambda_1 \prec \cdots \prec \lambda_s, 
			\sum_{l=1}^s k_l = i
			~~\text{and}~~
			\sum_{l=1}^s k_l \lambda_l = \beta
			\big\}
			.
			\label{EQ46}
		\end{split}
	\end{align}
\end{Lemma}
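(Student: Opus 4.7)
My plan is to prove the identity by a formal Taylor coefficient computation, which is the cleanest route since both sides are polynomial in the data $h^{(i)}(y_0)$ and $\partial^\lambda g(x_0)$. After translating so that $x_0 = \mathbf{0}$ and $y_0 = 0$, and observing that smoothness of $h$ and $g$ reduces the claim to an identity of formal power series (only finitely many terms contribute to any fixed coefficient of $\bfx^\beta$), I would write
\[
  h(y) = \sum_{i \geq 0} \frac{h^{(i)}(0)}{i!}\, y^i
  \qquad \text{and} \qquad
  g(x) = \sum_{\lambda \succ \mathbf{0}} \frac{\partial^\lambda g(0)}{\lambda!}\, \bfx^\lambda,
\]
where the constant term of $g$ vanishes by the normalization $g(0) = 0$.

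The key step is to substitute and expand $g(x)^i$ by the multinomial theorem as
\[
  g(x)^i = \sum_{(m_\lambda)\colon \sum_\lambda m_\lambda = i} \frac{i!}{\prod_\lambda m_\lambda!} \prod_\lambda \left( \frac{\partial^\lambda g(0)}{\lambda!} \right)^{m_\lambda} \bfx^{\sum_\lambda m_\lambda \lambda},
\]
where the tuples $(m_\lambda)_{\lambda \succ \mathbf{0}}$ are finitely supported in $\mathbb{N}_0$. Extracting the coefficient of $\bfx^\beta/\beta!$ in $f(x) = h(g(x))$ amounts to selecting tuples with $\sum_\lambda m_\lambda \lambda = \beta$. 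Recording each such tuple by its support $\lambda_1 \prec \cdots \prec \lambda_s$ together with positive multiplicities $k_l := m_{\lambda_l}$ recovers precisely the index set $P_s(i,\beta)$ from \eqref{EQ46}. Multiplying through by $\beta!$, using $\partial^\beta f(0) = \beta!\,[\bfx^\beta] f$, and cancelling the factor $i!$ against $h^{(i)}(0)/i!$ yields the claimed formula.

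The main obstacle is purely combinatorial: one must verify that the indexing bijection between finitely supported nonnegative tuples $(m_\lambda)$ and strictly $\prec$-increasing lists $(\lambda_1,\ldots,\lambda_s)$ with multiplicities is correct, and that the symmetry factors match. Specifically, $\prod_\lambda m_\lambda!$ from the multinomial expansion collapses to $\prod_{l=1}^s k_l!$ once the zero entries (each contributing $0!=1$) are discarded, and the outer sum over $s$ simply records the number of distinct multi-indices appearing in the partition of $\beta$, while the constraints $\sum_l k_l = i$ and $\sum_l k_l \lambda_l = \beta$ in \eqref{EQ46} correspond exactly to fixing the power $i$ of $g$ and the target monomial $\bfx^\beta$. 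An equivalent but more tedious route would be induction on $|\beta|$, applying one additional partial derivative $\partial_j$ and reconciling the resulting recurrence with the $P_s(i,\beta+e_j)$ structure; the formal power series argument avoids that bookkeeping and is the approach I would follow.
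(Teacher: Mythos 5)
Your argument is correct, but it is worth noting that the paper does not prove Lemma~\ref{L02} at all: it simply recalls the statement as \cite[Theorem~2.1]{CS} (Constantine--Savits). So your self-contained generating-function derivation is a genuine addition rather than a parallel to an existing proof. The computation itself is sound: after normalizing $x_0=\mathbf{0}$, $y_0=0$, composing the Taylor series, expanding $g(\bfx)^i$ by the multinomial theorem, and re-indexing a finitely supported tuple $(m_\lambda)$ by its support $\lambda_1\prec\cdots\prec\lambda_s$ with multiplicities $k_l=m_{\lambda_l}$, the factor $i!$ cancels against $h^{(i)}(0)/i!$, $\prod_\lambda m_\lambda!$ becomes $\prod_l k_l!$, and the constraints $\sum_l k_l=i$, $\sum_l k_l\lambda_l=\beta$ reproduce $P_s(i,\beta)$ exactly; multiplying by $\beta!$ gives the stated formula. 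The one step you should make explicit rather than gesture at is the passage from smooth to formal: since $h,g$ are only smooth, the series need not converge, so you should either (i) observe that $\partial^\beta(h\circ g)(x_0)$ is a universal polynomial with integer coefficients in the jets of $h$ and $g$ of order at most $|\beta|$ (by iterating the chain and product rules), so that it suffices to verify the identity for polynomial $h,g$, where the coefficient extraction is exact; or (ii) replace $h,g$ by their Taylor polynomials of degree $|\beta|$, which does not change either side. With that sentence added the proof is complete. It is also worth pointing out that your argument is essentially the paper's proof of Lemma~\ref{L03} run in reverse: the authors deduce the power-series Faa di Bruno formula from Lemma~\ref{L02}, whereas you prove Lemma~\ref{L02} directly from the power-series manipulation, which arguably makes Lemma~\ref{L03} an immediate byproduct rather than a separate lemma.
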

\colb

The next lemma provides a multivariate Faa di Bruno formula for power series, which is needed in the proof of Lemma~\ref{L05}.

\cole
\begin{Lemma}{(Power series Faa di Bruno)}
	\label{L03}
	Suppose
	\begin{align*}
		\phi(x) 
		=	
		\sum_{n=0}^\infty a_n x^n
		\comma
		x\in \mathbb{R}
		,
	\end{align*}
	where $a_n \in \mathbb{R}$ for all $n \in \mathbb{N}_0$, and
	\begin{align*}
		\psi(\bfx) 
		= 
		\sum_{\beta\in \mathbb{N}_0^3, |\beta| \geq 1} b_\beta \bfx^\beta
		\comma
		\bfx \in \mathbb{R}^3
		,
	\end{align*}
	where $b_\beta \in \mathbb{R}$ for $\beta\in \mathbb{N}_0^3$ with $|\beta| \geq 1$.
	Then the composition $\phi(\psi(\bfx))$ is a power series and can be written as
	\begin{align*}
		\phi(\psi(\bfx)) 
		= 
		\sum_{\beta\in \mathbb{N}_0^3} 
		c_\beta \bfx^\beta	
		\comma
		\bfx \in \mathbb{R}^3
		,
	\end{align*}
	where $c_{(0,0,0)} = a_0$ and
	\begin{align}
		c_\beta 
		=
		\sum_{i=1}^{|\beta|}
		\sum_{s=1}^{|\beta|}
		\sum_{P_s (i,\beta)} \binom{i}{k_1, \ldots, k_s} 
		a_i 
		\prod_{l=1}^s
		b_{\lambda_l}^{k_l}
		,
		\label{EQ67}
	\end{align}
	for $\beta \in \mathbb{N}_0^3$ with $|\beta| \geq 1$.
\end{Lemma}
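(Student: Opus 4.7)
The plan is to reduce the statement to the scalar multivariate Faà di Bruno formula of Lemma~\ref{L02} applied to $f(\bfx) := \phi(\psi(\bfx))$ at the base point $x_0 = \mathbf{0}$, and then identify the Taylor coefficient $c_\beta$ as $\partial^\beta f(\mathbf{0})/\beta!$. First I would observe that, since every $\beta$ appearing in the expansion of $\psi$ has $|\beta|\ge 1$, we have $\psi(\mathbf{0}) = 0$, so $y_0 = \psi(\mathbf{0}) = 0$ in the notation of Lemma~\ref{L02}. Also, termwise differentiation of the two power series gives $h^{(i)}(0) = i!\, a_i$ for $\phi = h$, and $\partial^{\lambda_l}\psi(\mathbf{0}) = \lambda_l!\, b_{\lambda_l}$ for any multi-index $\lambda_l \succ \mathbf{0}$, since every other monomial in $\psi$ differentiates to a quantity vanishing at $\mathbf{0}$.

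Next I would substitute these two identities into Lemma~\ref{L02}. For $\beta$ with $|\beta|\ge 1$,
\begin{align*}
\partial^\beta f(\mathbf{0})
= \sum_{i=1}^{|\beta|} i!\, a_i \sum_{s=1}^{|\beta|} \sum_{P_s(i,\beta)} \beta!\, \prod_{l=1}^s \frac{(\lambda_l!\, b_{\lambda_l})^{k_l}}{k_l!\, \lambda_l!^{k_l}}.
\end{align*}
The factors $\lambda_l!^{k_l}$ in the numerator and denominator cancel, leaving $\beta!\,\prod_l b_{\lambda_l}^{k_l}/k_l!$. Dividing by $\beta!$ converts the $\partial^\beta$ derivative into the Taylor coefficient $c_\beta$, and because the constraint $\sum_{l=1}^s k_l = i$ is built into the set $P_s(i,\beta)$, the ratio $i!/(k_1!\cdots k_s!)$ is precisely the multinomial coefficient $\binom{i}{k_1,\ldots,k_s}$; this gives exactly \eqref{EQ67}. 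The case $\beta = (0,0,0)$ is handled separately by evaluating $\phi(\psi(\mathbf{0})) = \phi(0) = a_0$, producing $c_{(0,0,0)} = a_0$ as stated.

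What is left is a mild analytic point: one needs $\phi\circ\psi$ to actually be represented by its Taylor series near $\mathbf{0}$ rather than merely as a formal object, so that Lemma~\ref{L02} is applicable. This follows from $\psi(\mathbf{0})=0$ together with the assumed convergence of $\phi$ near $0$ and of $\psi$ near $\mathbf{0}$: for $|\bfx|$ small enough, $|\psi(\bfx)|$ lies inside the radius of convergence of $\phi$, and so the composition is real-analytic at $\mathbf{0}$ and coincides with its Taylor series. No other obstacle is anticipated; the argument is essentially the bookkeeping already performed above, and the only genuine ingredient is Lemma~\ref{L02}.
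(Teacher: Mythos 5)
Your proposal is correct and follows essentially the same route as the paper: both identify $c_\beta$ with $\partial^\beta(\phi\circ\psi)(\mathbf{0})/\beta!$, apply Lemma~\ref{L02} with $\psi(\mathbf{0})=0$, substitute $h^{(i)}(0)=i!\,a_i$ and $\partial^{\lambda_l}\psi(\mathbf{0})=\lambda_l!\,b_{\lambda_l}$, and cancel the $\lambda_l!^{k_l}$ factors to recover the multinomial coefficient. The only (harmless) difference is that you add an explicit convergence justification for applying the pointwise Faà di Bruno formula, whereas the paper treats the composition as a formal power series.
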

\colb

\begin{proof}[Proof of Lemma~\ref{L03}]
	The composition $\phi(\psi(\bfx))$ is a formal power series and thus we assume that
	\begin{align}
		\phi(\psi(\bfx)) 
		= 
		\sum_{\beta\in \mathbb{N}_0^3} 
		d_\beta \bfx^\beta	
		\comma
		\bfx \in \mathbb{R}^3
		,
		\label{EQ70}
	\end{align}
	where $d_\beta\in \mathbb{R}$ with $\beta \in \mathbb{N}_0^3$. 
	For the constant term of \eqref{EQ70} we have $d_{(0,0,0)} = a_0$. 
	Fix $\beta\in \mathbb{N}_0^3$ where $|\beta| \geq 1$. 
	We apply $\partial^\beta$ to \eqref{EQ70} on both sides and evaluate at $\bfx = \mathbf{0}$, 
	obtaining
	\begin{align}
		\partial^\beta 
		(\phi(\psi(\bfx)))|_{\bfx = \mathbf{0}}	
		=
		\beta! d_\beta 
		.
		\label{EQ68}
	\end{align}
	On the other hand, using Lemma~\ref{L02}, we arrive at
	\begin{align}
		\begin{split}
			\partial^\beta
			 (\phi(\psi(\bfx)))|_{\bfx= \mathbf{0}}
			&	
			=
			\sum_{i=1}^{|\beta|}
			\sum_{s=1}^{|\beta|}
			\sum_{P_s (i, \beta)} 	\phi^{(i)}(\psi(\mathbf{0})) \beta! 
			\prod_{l=1}^s \frac{( \partial^{\lambda_l} \psi(\mathbf{0}) )^{k_l}}{k_l! \lambda_l!^{k_l}}	
			=
			\sum_{i=1}^{|\beta|}
			\sum_{s=1}^{|\beta|}
			\sum_{P_s (i, \beta)} i!	a_i  \beta! 
			\prod_{l=1}^s \frac{\lambda_l!^{k_l} b_{\lambda_l}  ^{k_l}}{k_l! \lambda_l!^{k_l}}	
			.
			\label{EQ69}
		\end{split}
	\end{align}
	Combining \eqref{EQ70}--\eqref{EQ69}, we obtain \eqref{EQ67} and thus conclude the proof of Lemma~\ref{L03}.
\end{proof}

\begin{proof}[Proof of Lemma~\ref{L04}]
Using the Leibniz rule, we obtain
	\begin{align}
	\begin{split}
	\mathcal{I}_1
	&
	\leq
	C
	\sum_{m=1}^\infty \sum_{j=1}^m \sum_{|\alpha|=m}  
	\sum_{|\beta|=j, \beta \leq \alpha}
	\binom{\alpha}{\beta}	
	\frac{\tau^m}{(m-3)!}
	\Vert \partial^\beta v \cdot \nabla \partial^{\alpha - \beta} u \Vert_{L^2}
	=
	\sum_{m=1}^\infty \sum_{j=1}^m
	I_{m,j}
	,
	\label{EQ63}
	\end{split}
	\end{align}
where
	\begin{align*}
	I_{m,j}	
	= 
	C
	\sum_{|\alpha| = m} \sum_{|\beta| =j, \beta \leq \alpha}
	\binom{\alpha}{\beta}
	\frac{\tau^m}{(m-3)!} 
	\Vert \partial^\beta v \cdot \nabla \partial^{\alpha - \beta} u \Vert_{L^2}
	.
	\end{align*}
We split the right side of \eqref{EQ63} according to low and high values of $j$. 
We claim that there exists a constant $C>0$ such that
	\begin{align}
	&
	\sum_{m=1}^\infty \sum_{j =1}^{[m/2]}
	I_{m,j}
	\leq
	C\Vert u \Vert_{A(\tau)} 
	\Vert u \Vert_{\tilde{A}(\tau)}
	\label{EQ35}
	\end{align}
and
	\begin{align}
	\sum_{m=1}^\infty \sum_{j =[m/2]+1}^{m}
	I_{m,j}
	\leq
	C \Vert u \Vert_{A(\tau)} \Vert u \Vert_{\tilde{A}(\tau)}
	.
	\label{EQ36}
	\end{align}

Proof of \eqref{EQ35}: Using H\"older and Sobolev inequalities we get
	\begin{align}
	\begin{split}
	\sum_{m=1}^\infty \sum_{j =1}^{[m/2]}
	I_{m,j}
	&
	\leq
	C
	\sum_{m=1}^\infty \sum_{j =1}^{[m/2]}
	\sum_{|\alpha| = m} \sum_{|\beta| = j, \beta \leq \alpha} \binom{\alpha}{\beta}
	\frac{\tau^m}{(m-3)!}
	\Vert \partial^\beta v \Vert_{L^2}^{1/4}
	\Vert D^2 \partial^\beta v \Vert_{L^2}^{3/4}
	\Vert \nabla \partial^{\alpha-\beta} u \Vert_{L^2}
	\\
	&
	\leq
	C \tau^{-3/2}
	\sum_{m=1}^\infty \sum_{j =1}^{[m/2]}
	 \sum_{|\alpha|=m} \sum_{|\beta|=j, \beta \leq \alpha}
	 	\left(
	 \frac{(m-j+1)\tau^{m-j}}{(m-j-2)!}
	 \Vert \nabla \partial^{\alpha-\beta} u \Vert_{L^2}
	 \right)
	 \\
	 &\indeqtimes
	\left(
	\frac{\tau^j}{(j-3)!}
	 \Vert \partial^\beta v \Vert_{L^2}
	\right)^{1/4}
	\left(
	\frac{\tau^{j+2}}{(j-1)!}
	\Vert D^2 \partial^\beta v \Vert_{L^2}
	\right)^{3/4}
	\mathcal{A}_{m,j,\alpha,\beta}
	,
	\label{EQ37}
	\end{split}
	\end{align}
where 
	\begin{align}
	\mathcal{A}_{m,j,\alpha,\beta}
	=
	\binom{\alpha}{\beta} 
	\frac{(j-3)!^{1/4} (j-1)!^{3/4} (m-j-2)!}{(m-j+1)(m-3)!}
	.
	\end{align}
Recall the combinatorial inequality
	\begin{align}
	\binom{\alpha}{\beta}
	\leq
	\binom{|\alpha|}{|\beta|}
	,
	\label{EQ40}
	\end{align}
which implies
	\begin{align}
	\begin{split}
	\mathcal{A}_{m,j,\alpha,\beta}
	&
	\leq
	\frac{m!}{j!(m-j)!} \frac{(j-3)!^{1/4} (j-1)!^{3/4} (m-j-2)!}{(m-j+1)(m-3)!}
	\leq
	\frac{Cm^3}{(m-j)^3}
	\leq
	C
	,
	\label{EQ38}
	\end{split}
	\end{align}
since $j \leq [m/2]$. 
Using
	\begin{align}
	\sum_{|\alpha| = m} \sum_{\beta \leq \alpha, |\beta| = j}
	x_{\beta} y_{\alpha-\beta}
	=
	\left(
	\sum_{|\beta| = j} x_{\beta}
	\right)	
	\left(
	\sum_{|\gamma| = m-j} y_{\gamma}
	\right)	
	.
	\label{EQ62}
	\end{align}
from \cite[Lemma~4.2]{KV}, together with \eqref{EQ37}--\eqref{EQ38} and the discrete H\"older inequality, we obtain
	\begin{align}
	\begin{split}
	\sum_{m=1}^\infty \sum_{j=1}^{[m/2]} 
	I_{m,j}
	&
	\leq
	C
	\sum_{m=1}^\infty \sum_{j=1}^{[m/2]} 
	\left( \sum_{|\beta| = j} \frac{\tau^j}{(j-3)!} \Vert \partial^\beta v \Vert_{L^2}
	\right)^{1/4}
	\left( \sum_{|\beta| = j} \frac{\tau^{j+2}}{(j-1)!} \Vert D^2 \partial^\beta v \Vert_{L^2}
	\right)^{3/4}
	\\
	&\indeqtimes
	\left( \sum_{|\gamma| = m-j} \frac{(m-j+1) \tau^{m-j}}{(m-j-2)!} 
	\Vert \partial^{\gamma} \nabla u \Vert_{L^2}
	\right)
	\\
	&
	\leq
	C\Vert u \Vert_{A(\tau)} 
	\Vert u \Vert_{\tilde{A}(\tau)}
	,
	\label{EQ42}
	\end{split}
	\end{align}
where the last inequality follows from the discrete Young inequality.

Proof of \eqref{EQ36}: We reverse the roles of $j$ and $m-j$ and proceed as in \eqref{EQ37}, obtaining
		\begin{align}
		\begin{split}
			&
			\sum_{m=1}^\infty \sum_{j =[m/2]+1}^{m}
			I_{m,j}
			\\
			&\indeq
			\leq
			C
			\sum_{m=1}^\infty \sum_{j =[m/2]+1}^{m}
			\sum_{|\alpha| = m} \sum_{|\beta| = j, \beta \leq \alpha} \binom{\alpha}{\beta}
			\frac{\tau^m}{(m-3)!}
			\Vert D^2 \nabla \partial^{\alpha-\beta} u \Vert_{L^2}^{3/4}
			\Vert \nabla \partial^{\alpha-\beta} u \Vert_{L^2}^{1/4}
			\Vert \partial^\beta v \Vert_{L^2}
			\\
			&\indeq
			\leq
			C \tau^{-3/2}
			\sum_{m=0}^\infty \sum_{j =[m/2]+1}^{m}
			\sum_{|\alpha|=m} \sum_{|\beta|=j, \beta \leq \alpha}
			\left(
			\frac{(m-j+3)\tau^{m-j+2}}{(m-j)!}
			\Vert D^2 \nabla \partial^{\alpha-\beta} u \Vert_{L^2}
			\right)^{3/4}
			\\
			&\indeqtimes
			\left(
			\frac{(m-j+1)\tau^{m-j}}{(m-j-2)!}
			\Vert \nabla \partial^{\alpha-\beta} u \Vert_{L^2}
			\right)^{1/4}
			\left(
			\frac{\tau^j}{(j-3)!}
			\Vert \partial^\beta v \Vert_{L^2}
			\right)
			\mathcal{B}_{m,j,\alpha,\beta}
			,
			\label{EQ39}
		\end{split}
	\end{align}
where 
	\begin{align*}
	\mathcal{B}_{m,j,\alpha,\beta}
	=
	\binom{\alpha}{\beta} 
	\frac{(m-j-2)!^{1/4} (m-j)!^{3/4}(j-3)! }{(m-j+3)^{3/4} (m-j+1)^{1/4} (m-3)!}
	.
	\end{align*}
Using \eqref{EQ40}, we get
	\begin{align}
	\begin{split}
	\mathcal{B}_{m,j,\alpha,\beta}
	&
	\leq
	\frac{m!}{j!(m-j)!} \frac{(m-j-2)!^{1/4} (m-j)!^{3/4} (j-3)! }{(m-j+3)^{3/4} 
	(m-j+1)^{1/4} (m-3)!}
	\leq
	\frac{Cm^3}{j^3}
	\leq
	C
	,
	\label{EQ41}
	\end{split}
	\end{align}
since $j \geq [m/2]+1$. We combine \eqref{EQ39}--\eqref{EQ41} and proceed as in \eqref{EQ42}, obtaining
	\begin{align*}
	\sum_{m=1}^\infty \sum_{j =[m/2]+1}^{m}
	I_{m,j}
	\leq
	C \Vert u \Vert_{A(\tau)} \Vert u \Vert_{\tilde{A}(\tau)}
	.
	\end{align*}

Combining \eqref{EQ63}--\eqref{EQ36}, we conclude the proof of Lemma~\ref{L04}.
\end{proof}

\begin{proof}[Proof of Lemma~\ref{L05}]
Using the Leibniz rule, we obtain
	\begin{align}
	\begin{split}
	\mathcal{I}_2
	&
	\leq
	\frac{C}{\epsilon} 
	\sum_{m=1}^\infty \sum_{j=1}^m \sum_{|\alpha| = m} \sum_{|\beta|=j, \beta \leq \alpha}
	\binom{\alpha}{\beta} \frac{\tau^m}{(m-3)!}
	\Vert \partial^\beta E^{-1} \nabla \partial^{\alpha -\beta} u \Vert_{L^2}
	.
	\label{EQ64}
	\end{split}
	\end{align}
We denote the non-trivial element of the matrix $E^{-1}(x)$ by $h(x)$.
Using Lemma~\ref{L02}, we arrive at
	\begin{align}
	\begin{split}
	\mathcal{I}_2
	&
	\leq
	C
	\sum_{m=1}^\infty \sum_{j=1}^m \sum_{i=1}^j \sum_{s=1}^j
	\sum_{|\alpha| = m} \sum_{|\beta|=j, \beta \leq \alpha} \sum_{P_s(i,\beta)}
	\binom{\alpha}{\beta}
	\frac{\epsilon^{i-1} \tau^m \beta!}{(m-3)!}
	\Vert 
	h^{(i)}(\epsilon u)
	\bigl(
	\prod_{l=1}^s
	\frac{	(\partial^{\lambda_l} p)^{k_l} }{k_l!\lambda_l!^{k_l}}
	\bigr)
	\nabla \partial^{\alpha-\beta} u
	\Vert_{L^2}
	\\
	&
	=
	\sum_{m=1}^\infty \sum_{j=1}^m
	J_{m,j}
	,
	\label{EQ47}
	\end{split}
	\end{align}	
where
	\begin{align*}
	J_{m,j}
	=
	C
	\sum_{i=1}^j
	 \sum_{s=1}^j
	\sum_{|\alpha| = m} \sum_{|\beta|=j, \beta \leq \alpha} \sum_{P_s(i,\beta)}
	\binom{\alpha}{\beta}
	\frac{\epsilon^{i-1} \tau^m \beta!}{(m-3)!}
	\Vert 
	h^{(i)} (\epsilon u)
	\bigl(
	\prod_{l=1}^s
	\frac{	(\partial^{\lambda_l} p)^{k_l} }{k_l!\lambda_l!^{k_l}}
	\bigr)
	\nabla \partial^{\alpha-\beta} u
	\Vert_{L^2}
	.
	\end{align*}
We split the far right side of \eqref{EQ47} according to the low and high values of $j$. We claim that there exists a constant $C>0$ such that
	\begin{align}
	&
	\sum_{m=1}^{\infty}
	\sum_{j=1}^{[m/2]} 
	J_{m,j}	
	\leq
	C \Vert u \Vert_{\tilde{A}(\tau)}
	+
	C \Vert u \Vert_{A(\tau)} \Vert u \Vert_{\tilde{A}(\tau)} 
	\label{EQ48}
	\end{align}
and
	\begin{align}
	\sum_{m=1}^{\infty}
	\sum_{j=[m/2]+1}^{m} 
	J_{m,j}	
	\leq
	C \Vert u \Vert_{A(\tau)} \Vert u \Vert_{\tilde{A}(\tau)} 
	.
	\label{EQ49}
	\end{align}

Proof of \eqref{EQ48}: Using H\"older and Sobolev inequalities, we arrive at
	\begin{align}
	\begin{split}
	J_{m,j}
	&
	\leq
	C
	\sum_{i=1}^j
	\sum_{s=1}^j
	\sum_{|\alpha| = m} \sum_{|\beta|=j, \beta \leq \alpha}\sum_{P_s(i,\beta)}
	C^i
	\binom{\alpha}{\beta}
	\frac{\epsilon^{i-1} \tau^m \beta!}{(m-3)!}
		\Vert \nabla \partial^{\alpha-\beta} u
	\Vert_{L^2}
	\Vert h^{(i)} (\epsilon u) \Vert_{L^\infty}
	\\
	&
	\indeqtimes
	\prod_{l=1}^s
	\frac{1}{k_l! \lambda_l!^{k_l}}
	\Vert D^2 \partial^{\lambda_l} p \Vert_{L^2}^{3k_l/4}
	 \Vert \partial^{\lambda_l} p \Vert_{L^2}^{k_l/4}
	\\
	&
	\leq
	C
	\sum_{i=1}^j
	\sum_{s=1}^j \sum_{|\alpha| = m} \sum_{|\beta|=j, \beta \leq \alpha}\sum_{P_s(i,\beta)}
	\frac{C^{i} \epsilon^{i-1}}{\eta^{i} \tau^{3i/2}}
		\left( \frac{(m-j+1)\tau^{m-j}}{(m-j-2)!}
	\Vert \nabla \partial^{\alpha - \beta} u \Vert_{L^2}
	\right)		
	\\&	\indeqtimes
	\left(
	\frac{\eta^i}{(i-3)!} \Vert h^{(i)} (\epsilon u) \Vert_{L^\infty}
	\right)
	\mathcal{C}_{m,j,i,s}
	\\&\indeqtimes
   \prod_{l=1}^s
	\left( \frac{\tau^{|\lambda_l|+2}}{(|\lambda_l|-1)!} \Vert D^{2} \partial^{\lambda_l} p \Vert_{L^2}
	\right)^{3k_l/4}
	\left( \frac{\tau^{|\lambda_l|}}{(|\lambda_l|-3)!} \Vert \partial^{\lambda_l} p \Vert_{L^2}
	\right)^{k_l/4}	
	,
	\label{EQ52}
	\end{split}
	\end{align}
where $\eta \in (0,1]$ is some constant to be determined below and
	\begin{align}
	\begin{split}
	\mathcal{C}_{m,j,i,s}
	&
	=
	\binom{\alpha}{\beta}
	\frac{(i-3)! (m-j-2)! \beta!}{(m-3)!(m-j+1)}	
	\prod_{l=1}^s \frac{(|\lambda_l|-1)!^{3k_l/4} (|\lambda_l|-3)!^{k_l/4}}{ k_l! \lambda_l!^{k_l}}
	.
	\label{EQ101}
	\end{split}
	\end{align}
For $\beta, \beta_1, \cdots, \beta_k \in \mathbb{N}_0^3$ and $k\in \mathbb{N}$,
using \eqref{EQ40} and induction, we obtain
\begin{align}
	\begin{split}
	\binom{\beta}{\beta_1, \beta_2, \cdots, \beta_k}
	\leq
	\binom{|\beta|}{|\beta_1|, |\beta_2|, \cdots, |\beta_k|}
	,
	\label{EQ100}
	\end{split}
\end{align}
where
\begin{align*}
	\beta 
	= 
	\sum_{i=1}^k \beta_i
	.
\end{align*}
From \eqref{EQ101}--\eqref{EQ100} we arrive at
	\begin{align}
	\begin{split}
		\mathcal{C}_{m,j,i,s}
		&
		\leq
		\frac{Cm!}{j!(m-j)!}
		\frac{(m-j-2)!}{(m-3)!(m-j+1)}
		\binom{i}{k_1, \ldots, k_s} 
		\frac{|\beta|! }{\prod_{l=1}^s |\lambda_l|!^{k_l}}
		\prod_{l=1}^s (|\lambda_l|-1)!^{3k_l/4} (|\lambda_l|-3)!^{k_l/4}
		\\
		&
		\leq
		 \frac{Cm^3}{(m-j)^3}
		\binom{i}{k_1, \ldots, k_s}
		\leq
		C \binom{i}{k_1, \ldots, k_s}
		,
		\label{EQ53}
	\end{split}
	\end{align}
since $j \leq [m/2]$.
Recall that $h(x)$ is an analytic function.
Thus, for any fixed $\zeta >0$, there exists a constant $\eta \in (0,1]$ such that
\begin{align}
\begin{split}
	\Vert D^i h(x) \Vert_{L^\infty} 
	\leq
	\frac{C (i-3)!}{\eta^i}
	\comma
	|x| \leq \zeta
	\comma
	i \in \mathbb{N}_0
	,
	\label{EQ102}
\end{split}	
\end{align}
for some constant $C>0$. It follows from Remark~\ref{R01} that
\begin{align}
	\begin{split}
		\Vert D^i h(\epsilon u) \Vert_{L^\infty} 
		\leq
		\frac{C (i-3)!}{\eta^i}
		\comma
		x\in \mathbb{R}^3
		\comma
		i \in \mathbb{N}_0
		,
		\label{EQ120}
	\end{split}	
\end{align}
by choosing $\epsilon \leq \zeta / CM_\epsilon (T)$.
Combining \eqref{EQ52}, \eqref{EQ53}, and \eqref{EQ120}, together with splitting the low and high values of $i$, we obtain
	\begin{align}
	\begin{split}
	&	
	\sum_{m=1}^{\infty}
	\sum_{j=1}^{[m/2]} 
	J_{m,j}	
	\\
	&\indeq
	\leq
	C
	\sum_{m=1}^{\infty}
	\sum_{j=1}^{[m/2]} \sum_{i=2}^{j}
	\sum_{s=1}^j \sum_{|\alpha| = m} \sum_{|\beta|=j, \beta \leq \alpha}\sum_{P_s(i,\beta)}
	\binom{i}{k_1, \ldots, k_s}
	\left( \frac{(m-j+1)\tau^{m-j}}{(m-j-2)!}
	\Vert \nabla \partial^{\alpha - \beta} u \Vert_{L^2}
	\right)	
	\\
	&\indeqtimes
	\frac{C^{i} 	\epsilon^{i/2}}{\eta^{i} \tau^{3i/2}}
	\prod_{l=1}^s
	\left( \frac{\tau^{|\lambda_l|+2}}{(|\lambda_l|-1)!} \Vert D^{2} \partial^{\lambda_l} p \Vert_{L^2}
	\right)^{3k_l/4}
	\left( \frac{\tau^{|\lambda_l|}}{(|\lambda_l|-3)!} \Vert \partial^{\lambda_l} p \Vert_{L^2}
	\right)^{k_l/4}
	\\
	&\indeq\indeq
	+
	\frac{C}{\eta \tau^{3/2}}\sum_{m=1}^{\infty}
	\sum_{j=1}^{[m/2]} 
	 \sum_{|\alpha| = m} \sum_{|\beta|=j, \beta \leq \alpha}
	\left( \frac{(m-j+1)\tau^{m-j}}{(m-j-2)!}
	\Vert \nabla \partial^{\alpha - \beta} u \Vert_{L^2}
	\right)	
	\\
	&\indeqtimes
	\left( \frac{\tau^{j+2}}{(j-1)!} \Vert D^{2} \partial^{\beta} p \Vert_{L^2}
	\right)^{3/4}
	\left( \frac{\tau^{j}}{(j-3)!} \Vert \partial^{\beta} p \Vert_{L^2}
	\right)^{1/4}
	\\
	&\indeq
	=
	J_1
	+
	J_2
,
	\label{EQ71}
	\end{split}
	\end{align}
where the inequality follows from 
\[
	P_s(1,\beta) = 
	\begin{cases}
		\{(1;\beta)\},  & \text{$s=1 $;}\\
		\emptyset, & \text{$s\geq 2$.}
	\end{cases}
\]
For the term $J_1$, we set the power series
	\begin{align*}
	\phi(x) 
	= 
	\sum_{n=2}^\infty
	(C \epsilon^{1/2} \eta^{-1} \tau^{-3/2})^n
 	x^n 
	\comma
	x\in \mathbb{R}
	\end{align*}
and 
	\begin{align*}
	\psi(\bfx) 
	= 
	\sum_{\beta \in \mathbb{N}_0^3, |\beta| \geq 1} b_\beta \bfx^\beta
	\comma
	\bfx\in \mathbb{R}^3
	,
	\end{align*}
where 
	\begin{align}
	b_\beta
	=
	\left( \frac{\tau^{|\beta|+2}}{(|\beta|-1)!} \Vert D^{2} \partial^{\beta} p \Vert_{L^2}
	\right)^{3/4}
	\left( \frac{\tau^{|\beta|}}{(|\beta|-3)!} \Vert \partial^{\beta} p \Vert_{L^2}
	\right)^{1/4}
	\label{EQ81}
	\end{align}
for $\beta \in \mathbb{N}_0^3$ with $|\beta| \geq 1$. 
It follows from Lemma~\ref{L03} that
	\begin{align}
	\sum_{n=2}^\infty 
	(C \epsilon^{1/2} \eta^{-1} \tau^{-3/2})^n
	\left(
		\sum_{\beta \in \mathbb{N}_0^3, |\beta| \geq 1} b_\beta \bfx^\beta
	\right)^n
	=
	\sum_{\beta \in \mathbb{N}_0^3, |\beta| \geq 1}
	e_\beta \bfx^\beta
	\comma
	\bfx \in \mathbb{R}^3
	,
	\label{EQ72}
	\end{align}
where	
	\begin{align}
	\begin{split}
	e_\beta
	&
	=
	\sum_{i=2}^{|\beta|} \sum_{s=1}^{|\beta|} \sum_{P_s (i,\beta)}
	(C \epsilon^{1/2} \eta^{-1} \tau^{-3/2})^i
	\binom{i}{k_1, \ldots, k_s}
	\prod_{l=1}^s
	\left( \frac{\tau^{|\lambda_l|+2}}{(|\lambda_l|-1)!} \Vert D^{2} \partial^{\lambda_l} p \Vert_{L^2}
	\right)^{3k_l/4}
	\\&\indeqtimes
	\left( \frac{\tau^{|\lambda_l|}}{(|\lambda_l|-3)!} \Vert \partial^{\lambda_l} p \Vert_{L^2}
	\right)^{k_l/4}
	,
	\label{EQ80}
	\end{split}
	\end{align}
for $\beta \in \mathbb{N}_0^3$ with $|\beta| \geq 1$.
From \eqref{EQ62}, \eqref{EQ71}, and \eqref{EQ80}, we arrive at
	\begin{align}
	\begin{split}
	J_1	
	&
	\leq
	C
	\sum_{m=1}^{\infty}
	\sum_{j=1}^{[m/2]} 
	 \sum_{|\alpha| = m} \sum_{|\beta|=j, \beta \leq \alpha}
	\left( \frac{(m-j+1)\tau^{m-j}}{(m-j-2)!}
	\Vert \nabla \partial^{\alpha - \beta} u \Vert_{L^2}
	\right)	e_\beta
	\\
	&
	\leq
	C
	\sum_{m=1}^\infty \sum_{j=1}^{[m/2]}
	\bigl( \sum_{|\beta| = j} e_\beta
	\bigr)
	\left( \sum_{|\gamma| = m-j} \frac{(m-j+1)\tau^{m-j}}{(m-j-2)!}
	\Vert \nabla \partial^{\gamma} u \Vert_{L^2}
	\right)
	\\&
	\leq
	C
	\Vert u \Vert_{\tilde{A}(\tau)}
	\sum_{n=2}^\infty 
	\bigl(
	C \epsilon^{1/2} \eta^{-1} \tau^{-3/2}
	\sum_{\beta \in \mathbb{N}_0^3, |\beta| \geq 1}
	b_\beta
	\bigr)^n
	,
	\label{EQ73}
	\end{split}
	\end{align}
where the last inequality follows from \eqref{EQ72} with $\bfx = (1, 1, 1)$. 
Note that from \eqref{EQ81} and the discrete Young inequality, we have
	\begin{align}
	\sum_{\beta \in \mathbb{N}_0^3, |\beta| \geq 1}
	b_\beta
	\leq
	C \Vert u \Vert_{A(\tau)}
	.
	\label{EQ115}
	\end{align}
Choosing $\epsilon \leq \eta^2 \tau^{3}/C M_{\epsilon}(T)^2$ and combining \eqref{EQ73}--\eqref{EQ115}, we arrive at
	\begin{align}
	J_1
	\leq
	C \Vert u \Vert_{\tilde{A}(\tau)}
	.
	\label{EQ82}
	\end{align}
For the term $J_2$, we proceed as in \eqref{EQ37}--\eqref{EQ42} to get
	\begin{align}
	J_2
	\leq
	C \Vert u \Vert_{A(\tau)}	\Vert u \Vert_{\tilde{A}(\tau)}
	.
	\label{EQ83}
	\end{align}
Combining \eqref{EQ71} and \eqref{EQ82}--\eqref{EQ83}, we conclude the proof of \eqref{EQ48}.

Proof of \eqref{EQ49}:
Using H\"older and Sobolev inequalities, together with \eqref{EQ100}, we arrive at
\begin{align}
	\begin{split}
		J_{m,j}
		&
		\leq
		C
		\sum_{i=1}^j
		\sum_{s=1}^j
		\sum_{|\alpha| = m} \sum_{|\beta|=j, \beta \leq \alpha}\sum_{P_s(i,\beta)}
		C^i
		\binom{\alpha}{\beta}
		\frac{\epsilon^{i-1} \tau^m |\beta|!}{(m-3)! k_s! |\lambda_s|!^{k_s}}
		\Vert h^{(i)} (\epsilon u) \Vert_{L^\infty}
		\\
		&
		\indeqtimes
		\left(
		\prod_{l=1}^{s-1}
		\frac{1}{k_l!|\lambda_l|!^{k_l}}
		\Vert D^2 \partial^{\lambda_l} p \Vert_{L^2}^{3k_l/4}
		\Vert \partial^{\lambda_l} p \Vert_{L^2}^{k_l/4}
		\right)
		\Vert \partial^{\lambda_s} p \Vert_{L^2}
		\Vert D^2 \partial^{\lambda_s} p \Vert_{L^2}^{3(k_s-1)/4}
		\\
		&\indeqtimes
		\Vert \partial^{\lambda_s} p \Vert_{L^2}^{(k_s-1)/4}
		\Vert D^2 \nabla \partial^{\alpha-\beta} u
		\Vert_{L^2}^{3/4}
		\Vert \nabla \partial^{\alpha-\beta} u
		\Vert_{L^2}^{1/4}
		\\
		&
		\leq
		C\sum_{i=1}^j
		\sum_{s=1}^j \sum_{|\alpha| = m} \sum_{|\beta|=j, \beta \leq \alpha}\sum_{P_s(i,\beta)}
		\frac{C^{i} \epsilon^{i-1}}{\eta^{i} \tau^{3i/2}}
		\left( \frac{\eta^{i}}{(i-3)!} \Vert h^{(i)}(\epsilon u) \Vert_{L^\infty}
		\right)
		\\
		&
		\indeqtimes
		\prod_{l=1}^{s-1}
		\left( \frac{\tau^{|\lambda_l|+2}}{(|\lambda_l|-1)!} \Vert D^{2} \partial^{\lambda_l} p \Vert_{L^2}
		\right)^{3k_l/4}
		\left( \frac{\tau^{|\lambda_l|}}{(|\lambda_l|-3)!} \Vert \partial^{\lambda_l} p \Vert_{L^2}
		\right)^{k_l/4}	
		\\
		&\indeqtimes
		\left( \frac{\tau^{|\lambda_s|}}{(|\lambda_s|-3)!} 
		\Vert \partial^{\lambda_s} p \Vert_{L^2}
		\right)
		\left( \frac{\tau^{|\lambda_s|+2}}{(|\lambda_s|-1)!} \Vert D^{2} \partial^{\lambda_s} p \Vert_{L^2}
		\right)^{3(k_s-1)/4}
		\\&\indeqtimes
		\left( \frac{\tau^{|\lambda_s|}}{(|\lambda_s|-3)!} \Vert \partial^{\lambda_s} p \Vert_{L^2}
		\right)^{(k_s-1)/4}	
				\left( \frac{(m-j+3)\tau^{m-j+2}}{(m-j)!}
		\Vert D^2 \nabla \partial^{\alpha - \beta} u \Vert_{L^2}
		\right)^{3/4}	
		\\
		&
		\indeqtimes
		\left( \frac{(m-j+1)\tau^{m-j}}{(m-j-2)!}
		\Vert \nabla \partial^{\alpha - \beta} u \Vert_{L^2}
		\right)^{1/4}	
		\mathcal{D}_{m,j,i,s}
		,
		\label{EQ75}
	\end{split}
\end{align}
where
	\begin{align*}
	\begin{split}
		\mathcal{D}_{m,j,i,s}
		&
		=
		\binom{\alpha}{\beta}
		\frac{(i-3)! (m-j-2)!^{1/4} (m-j)!^{3/4} |\beta|!}{(m-3)!(m-j+1)^{1/4} (m-j+3)^{3/4}}	
				\frac{(|\lambda_s|-1)!^{3(k_s-1)/4} (|\lambda_s|-3)!^{(k_s-1)/4} (|\lambda_s|-3)!}{ k_s! |\lambda_s|!^{k_s}}
		\\&\indeqtimes
		\prod_{l=1}^{s-1} \frac{(|\lambda_l|-1)!^{3k_l/4} (|\lambda_l|-3)!^{k_l/4}}{ k_l! |\lambda_l|!^{k_l}}
		.
	\end{split}
	\end{align*}
Using \eqref{EQ40}, we arrive at
	\begin{align*}
		\mathcal{D}_{m,j,i,s}
		\leq
		C
		\binom{i}{k_1, \ldots, k_s} \frac{m! (|\lambda_s| -3)!}{(m-3)! |\lambda_s|!}
			\leq
		C \binom{i}{k_1, \ldots, k_s} \frac{m^3}{|\lambda_s|^3}
		.
	\end{align*}
Note that from \eqref{EQ46} we get
	\begin{align*}
	 |\beta|	
	 =
	\sum_{l=1}^s k_l |\lambda_l| 
	\leq
	|\lambda_s| \sum_{l=1}^s k_l
	=
	i 	|\lambda_s|
	,
	\end{align*}
from where
	\begin{align}
	\mathcal{D}_{m,j,i,s}
	\leq
	C \binom{i}{k_1, \ldots, k_s}
	\frac{m^3 i^3}{|\beta|^3}
	\leq
	C \binom{i}{k_1, \ldots, k_s} i^3
	,
	\label{EQ74}
	\end{align}
since $|\beta| = j \geq [m/2]+1$. 
From  \eqref{EQ120} and \eqref{EQ75}--\eqref{EQ74}, we obtain
	\begin{align}
	\begin{split}
	&
	\sum_{m=1}^\infty \sum_{j=[m/2]+1}^{m} 
	J_{m,j}
	\\
	&\indeq
	\leq	
	C
	\sum_{m=1}^\infty \sum_{j=[m/2]+1}^{m} \sum_{i=2}^{j}
	\sum_{s=1}^j \sum_{|\alpha| = m} \sum_{|\beta|=j, \beta \leq \alpha}\sum_{P_s(i,\beta)}
	\frac{C^{i} 	\epsilon^{i/2}}{\eta^{i} \tau^{3i/2}}
	\binom{i}{k_1, \ldots, k_s}
	\\
	&\indeqtimes
	\left( \frac{\tau^{|\lambda_s|}}{(|\lambda_s|-3)!}
	\Vert \partial^{\lambda_s} p \Vert_{L^2}
	\right)	
	\left( \frac{(m-j+1)\tau^{m-j}}{(m-j-2)!}
	\Vert \nabla \partial^{\alpha - \beta} u \Vert_{L^2}
	\right)^{1/4}
	\\
	&\indeqtimes
	\left( \frac{(m-j+3)\tau^{m-j+2}}{(m-j)!}
	\Vert D^2 \nabla \partial^{\alpha - \beta} u \Vert_{L^2}
	\right)^{3/4}	
	\\
	&\indeqtimes
	\prod_{l=1}^{s-1}
	\left( \frac{\tau^{|\lambda_l|+2}}{(|\lambda_l|-1)!} \Vert D^{2} \partial^{\lambda_l} p \Vert_{L^2}
	\right)^{3k_l/4}
	\left( \frac{\tau^{|\lambda_l|}}{(|\lambda_l|-3)!} \Vert \partial^{\lambda_l} p \Vert_{L^2}
	\right)^{k_l/4}
		\\
	&\indeqtimes
	\left( \frac{\tau^{|\lambda_s|+2}}{(|\lambda_s|-1)!} \Vert D^{2} \partial^{\lambda_s} p \Vert_{L^2}
	\right)^{3(k_s-1)/4}
	\left( \frac{\tau^{|\lambda_s|}}{(|\lambda_s|-3)!} \Vert \partial^{\lambda_s} p \Vert_{L^2}
	\right)^{(k_s-1)/4}
	\\
	&\indeq\indeq
	+
	C \sum_{m=1}^{\infty}
	\sum_{j=[m/2]+1}^{m} 
	 \sum_{|\alpha| = m} \sum_{|\beta|=j, \beta \leq \alpha}
	\left( \frac{\tau^{j}}{(j-3)!}
	\Vert \partial^{\beta} p \Vert_{L^2}
	\right)	
	\\
	&\indeqtimes
	\left( \frac{(m-j+1)\tau^{m-j}}{(m-j-2)!}
	\Vert \nabla \partial^{\alpha - \beta} u \Vert_{L^2}
	\right)^{1/4}
		\left( \frac{(m-j+3)\tau^{m-j+2}}{(m-j)!}
	\Vert D^2 \nabla \partial^{\alpha - \beta} u \Vert_{L^2}
	\right)^{3/4}
	\\
	&\indeq
	=
	J_3
	+
	J_4
	.
	\label{EQ116}
	\end{split}		
	\end{align}
For the term $J_3$, we use the change of variable $i'= i-1$ to get
\begin{align}
	\begin{split}
	J_3 
	&
	\leq	
	C
	\sum_{m=1}^\infty \sum_{j=[m/2]+1}^{m} 
	\sum_{|\alpha| = m} \sum_{|\beta|=j, \beta \leq \alpha} \sum_{\omega \leq \beta, |\omega| \geq 1}
	\sum_{i'=1}^{|\beta -\omega|}
	\sum_{s=1}^{|\beta -\omega|} 
	\sum_{P_s(i',\beta-\omega)}
	\frac{C^{i'} 	\epsilon^{(i'+1)/2}}{\eta^{i'+1} \tau^{3(i'+1)/2}}
	\binom{i'}{k'_1, \ldots, k'_s}
	\\
	&\indeqtimes
	\left( \frac{\tau^{|\omega|}}{(|\omega|-3)!}
	\Vert \partial^{\omega} p \Vert_{L^2}
	\right)	
	\left( \frac{(m-j+1)\tau^{m-j}}{(m-j-2)!}
	\Vert \nabla \partial^{\alpha - \beta} u \Vert_{L^2}
	\right)^{1/4}
	\\
	&\indeqtimes
	\left( \frac{(m-j+3)\tau^{m-j+2}}{(m-j)!}
	\Vert D^2 \nabla \partial^{\alpha - \beta} u \Vert_{L^2}
	\right)^{3/4}	
	\\
	&\indeqtimes
	\prod_{l=1}^{s}
	\left( \frac{\tau^{|\lambda_l'|+2}}{(|\lambda_l'|-1)!} \Vert D^{2} \partial^{\lambda_l'} p \Vert_{L^2}
	\right)^{3k'_l/4}
	\left( \frac{\tau^{|\lambda_l'|}}{(|\lambda_l'|-3)!} \Vert \partial^{\lambda_l'} p \Vert_{L^2}
	\right)^{k'_l/4}
	,
	\label{EQ123}
	\end{split}
\end{align}
where the element of $ P_s(i',\beta-\omega)$ is denoted by
$(k'_1\dots,k'_s; \lambda'_1,\dots\lambda'_s) $.
Fix $m,j,k\in \mathbb{N}_0$ with $k \leq j \leq m$. Using \eqref{EQ62} we get
\begin{align}
	\sum_{|\alpha| =m}
	\sum_{\beta \leq \alpha, |\beta| = j}
	\sum_{\omega \leq \beta, |\omega| = k}
	x_{\omega} y_{\alpha - \beta} z_{\beta - \omega}
	=
	\left(
	\sum_{|\gamma| = k} x_\gamma
	\right)
	\left(
	\sum_{|\gamma| = m-j} y_\gamma
	\right)
	\left(
	\sum_{|\gamma| = j-k} z_\gamma
	\right)
	.
	\label{EQ121}
\end{align}
From \eqref{EQ123}--\eqref{EQ121}, we proceed as in \eqref{EQ71}--\eqref{EQ82} to get
	\begin{align}
		\begin{split}
	J_3
	&
	\leq
	C
	\sum_{m=1}^\infty \sum_{j=[m/2]+1}^{m} \sum_{k=1}^j
	\bigl( \sum_{|\gamma| = j-k} e_\gamma
	\bigr)
	\bigl( \sum_{|\gamma| = k} 
	\frac{\tau^{k}}{(k-3)!} \Vert \partial^\gamma p\Vert_{L^2}
	\bigr)
	\\&\indeqtimes
	\left( \sum_{|\gamma| = m-j} \frac{(m-j+1)\tau^{m-j}}{(m-j-2)!}
	\Vert \nabla \partial^{\gamma} u \Vert_{L^2}
	\right)
	+
	C \epsilon \Vert u\Vert_{A(\tau)}^2 \Vert u\Vert_{\tilde{A}(\tau)}
	\\
	&
	\leq
	C
	\Vert u \Vert_{\tilde{A}(\tau)}
	\Vert u\Vert_{A(\tau)}
	\sum_{n=2}^\infty 
	\bigl(
	C \epsilon^{1/2} \eta^{-1} \tau^{-3/2}
	\sum_{\beta \in \mathbb{N}_0^3, |\beta| \geq 1}
	b_\beta
	\bigr)^n
	+
	C \epsilon \Vert u\Vert_{A(\tau)}^2 \Vert u\Vert_{\tilde{A}(\tau)}
	\\
	&
	\leq
	C 	\Vert u \Vert_{\tilde{A}(\tau)}
	\Vert u\Vert_{A(\tau)}
	,
	\label{EQ103}
	\end{split}
	\end{align}
by choosing $\epsilon \leq \eta^2 \tau^3/ CM_\epsilon (T)^2$, where $e_\gamma$ and $b_\beta$ are as in \eqref{EQ80} and \eqref{EQ81}.
For the term $J_4$, we proceed as in \eqref{EQ37}--\eqref{EQ42} to get
\begin{align}
	J_4
	\leq
	C\Vert u\Vert_{A(\tau)} \Vert u\Vert_{\tilde{A}(\tau)}
	.
	\label{EQ104}
\end{align}
Combining \eqref{EQ116}--\eqref{EQ104}, we conclude the proof of \eqref{EQ49}.

The proof of Lemma~\ref{L05} is thus completed by combining \eqref{EQ47}--\eqref{EQ49}.
\end{proof}

\section*{Acknowledgments}
LL was supported in part by the NSF grants DMS-2009458 and DMS-1907992, while YT was supported in part by the NSF grant DMS-1106853.
The authors are grateful to Juhi Jang and Igor Kukavica for fruitful discussions.

\end{document}